\documentclass[11pt,oneside,english]{amsart}
\usepackage[T1]{fontenc}
\usepackage[latin9]{inputenc}
\usepackage{babel}
\usepackage{verbatim}
\usepackage{amstext}
\usepackage{amsthm}
\usepackage{amssymb}
\usepackage[unicode=true,pdfusetitle,
 bookmarks=true,bookmarksnumbered=false,bookmarksopen=false,
 breaklinks=false,pdfborder={0 0 1},backref=false,colorlinks=false]
 {hyperref}

\makeatletter
\numberwithin{equation}{section}
\numberwithin{figure}{section}
\theoremstyle{plain}
\newtheorem{thm}{\protect\theoremname}
  \theoremstyle{remark}
  \newtheorem{rem}[thm]{\protect\remarkname}
  \theoremstyle{plain}
  \newtheorem{lem}[thm]{\protect\lemmaname}
  \theoremstyle{plain}
  \newtheorem{cor}[thm]{\protect\corollaryname}
  \theoremstyle{plain}
  \newtheorem*{thm*}{\protect\theoremname}

\newcommand{\ra}{\rightarrow}

\newcommand{\cG}{{\mathcal G}}

\newcommand{\cO}{{\mathcal O}}

\newcommand{\bN}{{\mathbb N}}

\newcommand{\bR}{{\mathbb R}}

\newcommand{\bZ}{{\mathbb Z}}
\newcommand{\bQ}{{\mathbb Q}}
\newcommand{\bF}{{\mathbb F}}

\newcommand{\bP}{{\mathbb P}}

\newcommand{\set}[1]{\left\{#1\right\}}
\newcommand{\pa}[1]{\left(#1\right)}
\newcommand{\av}[1]{\left|#1\right|}

\newif\ifdraft\drafttrue

\marginparsep = 2pt

\makeatother

  \providecommand{\corollaryname}{Corollary}
  \providecommand{\lemmaname}{Lemma}
  \providecommand{\remarkname}{Remark}
  \providecommand{\theoremname}{Theorem}
\providecommand{\theoremname}{Theorem}

\begin{document}

\title{Continued fractions of arithmetic sequences of quadratics}

\author{Menny Aka}
\begin{abstract}
Let $x$ be a quadratic irrational and let $\bP$ be the set of prime
numbers. We show the existence of an infinite set $S\subset\bP$ such
that the statistics of the period of the continued fraction expansions
along the sequence $\set{px:p\in S}$ approach the `normal' statistics
given by the Gauss-Kuzmin measure. Under the generalized Riemann hypothesis,
we prove that there exist full density subsets $S\subset\bP$ and
$T\subset\bN$ satisfying the same assertion. We give a rate of convergence
in all cases. 
\end{abstract}

\address{ETH Zürich\\
Rämistrasse 101\\
CH-8092 Zürich\\
Switzerland }

\email{mennyaka@math.ethz.ch}
\maketitle

\section{Introduction}

In this paper we address some natural questions that were raised in
\cite{AS1} on the study of the evolution of the continued fraction
expansion within a fixed real quadratic field. The reader is referred
there for more details and motivation. We review here the necessary
definitions for the presentation of our results.

For a real number $x\in[0,1]\setminus\bQ$ let $x=[a_{1}(x),a_{2}(x),\dots]$
be its uniquely defined continued fraction expansion. For a finite
sequence of natural numbers $w=(w_{1},\dots,w_{k})$ we define the
\textit{frequency of appearance} of the pattern $w$ in the continued
fraction expansion of $x$, by
\[
D(x,w)=\lim_{N}\frac{1}{N}\#\set{1\le n\le N:w=(a_{n+1}(x),\dots,a_{n+k}(x))}.
\]
We make the convention that for $x\in\bR\setminus\bQ$, $D(x,w):=D(\bar{x},w)$
where $\bar{x}\in[0,1]$ and $\bar{x}=x\mod1$. As we explain in the
introduction of \cite{AS1}, for Lebesgue almost any $x$, $D(x,w)$
exists and is equal to some explicit integral which depends only on
$w$. We denote this almost sure value as $c_{w}$.

In \cite{AS1} we fix a quadratic irrational $x$ and study the behavior
of $D(p^{n}x,w)$ as $n\ra\infty$ where $p$ is a \emph{fixed} natural
number, or more generally, the behavior of $D(p_{n}x,w)$ when $p_{n}\ra\infty$
are supported on a fixed finite set of primes $S$ (by supported we
mean that if a prime $p$ divides $p_{n}$ for some $n$ then $p\in S$).
The methods of \cite{AS1} fail when the numbers $p_{n}$ are not
supported of finite set of primes. The aim of this note is merely
to rephrase known results in analytic number theory using a refinement
of Duke's Theorem (See section \ref{subsec:Duke's-Theorem-for subcolelction})
in order to answer question (4) of \cite[\S 2.7]{AS1} and some related
results. Using the results of \cite{Cohen2007,Nark88} we have the
following unconditional result: 
\begin{thm}
\label{thm:unconditional}There exist at most two real quadratic fields
such that if a quadratic irrational $x$ does not belong to them,
there exist an infinite subset $S\subset\bP$ and $\delta>0$\textup{
}such that for any pattern $w$ there exists\textup{ a constant $C=C(w,x)$}
with 
\[
p\in S\implies\av{D(px,w)-c_{w}}<Cp^{-\delta}.
\]
\end{thm}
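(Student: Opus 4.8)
The plan is to convert the statement into a question of quantitative equidistribution of a single closed geodesic, and then to feed in an Artin-type result on the multiplicative order of the fundamental unit. Following the dictionary set up in \cite{AS1}, the frequency $D(px,w)$ is a Birkhoff average of the cylinder indicator $\mathbf 1_{[w]}$ along the closed geodesic $\gamma_{px}$ attached to the quadratic irrational $px$ on the modular surface, while $c_{w}$ is the integral of the same function against the Haar (Gauss--Kuzmin) measure. Thus $\av{D(px,w)-c_{w}}$ is precisely the discrepancy of $\gamma_{px}$ from the uniform measure tested against $\mathbf 1_{[w]}$, and the theorem becomes a quantitative equidistribution statement for the geodesics $\gamma_{px}$ as $p$ ranges over a set $S$ to be chosen.

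Next I would pin down the arithmetic of $\gamma_{px}$. Writing $x\in K=\bQ(\sqrt d)$, letting $\cO_{K}$ have fundamental unit $\varepsilon_{K}$ and class number $h_{K}$, the irrational $px$ has discriminant $D_{p}\asymp p^{2}$, and its geodesic lives inside the discriminant-$D_{p}$ packet attached to the order $\cO_{p}=\bZ+p\cO_{K}$ of conductor $p$. The single geodesic is the orbit of the trivial subgroup of the class group $\mathrm{Cl}(\cO_{p})$, so it occupies the fraction $1/h(\cO_{p})$ of the full packet, where $h(\cO_{p})=h_{K}\,p\,\big(1-\big(\tfrac{d_{K}}{p}\big)\tfrac1p\big)/k$ and $k=[\cO_{K}^{\times}:\cO_{p}^{\times}]$ is essentially the multiplicative order of $\varepsilon_{K}$ in $(\cO_{K}/p\cO_{K})^{\times}$. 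In particular, when $\varepsilon_{K}$ is a primitive root modulo $p$ the index $k$ is almost maximal and $h(\cO_{p})=O(h_{K})$.

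I would then invoke the refinement of Duke's Theorem for a subcollection from Section \ref{subsec:Duke's-Theorem-for subcolelction}. Expanding the single-geodesic period spectrally and detecting it inside the packet by orthogonality of class-group characters costs a sum over all $h(\cO_{p})$ characters of $\mathrm{Cl}(\cO_{p})$; subconvexity for the resulting twisted $L$-functions gives a bound $\ll D_{p}^{-\delta_{0}}$ for each character uniformly, hence $\ll h(\cO_{p})\,D_{p}^{-\delta_{0}}$ for each fixed smooth test function, with $\delta_{0}>0$ absolute. Therefore the single geodesic equidistributes with a power saving exactly when $h(\cO_{p})$ is smaller than the subconvexity saving, i.e. when $k$ is almost maximal ($k\geq p^{1-\eta}$ with $\eta<2\delta_{0}$ suffices, and $\varepsilon_{K}$ a primitive root is the clean case): there the error is $\ll_{x} p^{-\delta}$. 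A standard smoothing, approximating the cutting-sequence box $\mathbf 1_{[w]}$ by smooth functions with controlled Sobolev norm, then replaces the test function by $\mathbf 1_{[w]}$ at the cost of adjusting $\delta$ and introducing the constant $C=C(w,x)$, which also absorbs $h_{K}$ and the conductor of $x$.

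The crux is thus the production of an infinite set $S\subset\bP$ along which $\varepsilon_{K}$ has almost maximal order modulo $p$, equivalently along which $h(\cO_{p})$ stays bounded. This is precisely an Artin primitive-root statement for the algebraic number $\varepsilon_{K}$, and it is here that the exceptional fields appear: the unconditional results recorded in \cite{Cohen2007,Nark88} (the Gupta--Murty--Heath-Brown method) guarantee such an $S$ for every real quadratic field outside an exceptional set of at most two fields, the two exceptions being inherited from the ``at most two exceptional bases'' in the unconditional resolution of Artin's conjecture. I expect this number-theoretic input --- securing infinitely many primes with large unit order while keeping the exceptional set down to two fields, and balancing the primitivity exponent against the subconvexity exponent $\delta_{0}$ --- to be the main obstacle; the dynamical and spectral steps are then routine once the refined Duke theorem of Section \ref{subsec:Duke's-Theorem-for subcolelction} is in hand.
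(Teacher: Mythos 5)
Your proposal follows essentially the same route as the paper: translate $\av{D(px,w)-c_{w}}$ into quantitative equidistribution of the single closed geodesic attached to $px$, reduce the subcollection version of Duke's theorem to the condition that $h(\cO_{p})$ is small --- equivalently, by the class number formula, that ${\rm Reg}\gg{\rm Disc}^{\frac{1}{2}-\epsilon}$, which is how the paper phrases it --- and secure this along an infinite set of primes via the Narkiewicz/Cohen unconditional analogue of Artin's conjecture for $\epsilon_{K}$, which is exactly where the at most two exceptional fields enter. The only step you gloss over is the reduction from a general quadratic irrational $x$ to the standard generator $x_{D}$ (needed so that the order of $px$ is genuinely comparable to the conductor-$p$ order and its unit index is controlled by ${\rm ord}_{p}(\epsilon_{D})$), which the paper carries out via a chain of Hecke neighbors with constants independent of $p$.
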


Using \cite{Par2003} with with \cite{MH2006,Popa2006}, we have the
following two conditional results:
\begin{thm}
\label{thm:GRH on Natural numbers and primes}Let $x$ be a quadratic
irrational. Then, assuming that the Generalized Riemann Hypothesis
(GRH) holds, there exist a density one subset $S\subset\bN$ and $\delta>0$
such that for any pattern $w$\textup{ there exists a constant $C=C(w,x)$}
with 
\[
m\in S\implies\av{D(mx,w)-c_{w}}<Cm^{-\delta}.
\]
Similarly, let $P$ denote the set of primes. Assuming that the GRH
holds, there exist a density one sequence $S\subset P$ and $\delta>0$
such that for any pattern $w$\textup{ there exists a constant $C=C(w,x)$}
with
\[
p\in S\implies\av{D(px,w)-c_{w}}<Cp^{-\delta}.
\]
\end{thm}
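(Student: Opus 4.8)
The plan is to convert the arithmetic assertion into the equidistribution of a single closed geodesic on the modular surface, and then to check, under GRH, that for almost all parameters this geodesic is long enough for the refined version of Duke's theorem from Section~\ref{subsec:Duke's-Theorem-for subcolelction} to apply.

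I would first record the standard dictionary. Fix $K=\bQ(x)$ with fundamental unit $\varepsilon_K$ and regulator $R_K=\log\varepsilon_K$. For $m\in\bN$ the quadratic irrational $mx$ lies in an order $\cO_{f}$ of conductor $f=f(m)\to\infty$, and its eventually periodic continued fraction codes a single primitive closed geodesic $\gamma_{mx}$ on $X=\mathrm{SL}_2(\bZ)\backslash\bH$ of discriminant $D_f\asymp f^{2}$ and length $\asymp R_f$, where $R_f=n_f R_K$ is the regulator of $\cO_f$ and $n_f$ is the least $n$ with $\varepsilon_K^{\,n}\in\cO_f$, i.e. the order of $\varepsilon_K$ in $(\cO_K/f\cO_K)^{\times}/(\bZ/f\bZ)^{\times}$. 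Under Series' coding the frequency $D(mx,w)$ is, up to an explicitly bounded error, the normalized integral of the indicator of the cylinder $[w]$ along $\gamma_{mx}$, while $c_w$ is the $[w]$-measure of the natural (geodesic-flow invariant) measure, whose projection to the interval is the Gauss-Kuzmin measure. Hence $\av{D(mx,w)-c_w}$ is dominated by the equidistribution discrepancy of the single geodesic $\gamma_{mx}$, and it suffices to equidistribute it with a power saving in $D_f$.

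Now $\gamma_{mx}$ is one member of the Duke packet attached to $\cO_f$, consisting of $h_f$ geodesics of common length $R_f$ and total length $h_f R_f\asymp\sqrt{D_f}$; thus by length it is a proportion $R_f/\sqrt{D_f}$ of the packet. Opening the single-geodesic period by class-group characters and applying Popa's period formula \cite{Popa2006} expresses the relevant Weyl sums through central values of Rankin--Selberg $L$-functions, for which GRH supplies Lindel\"of-quality bounds (the subconvex framework being that of \cite{MH2006}). The refined Duke theorem then equidistributes the subcollection $\set{\gamma_{mx}}$ with a power saving as soon as its relative length exceeds $D_f^{-\eta}$ for an admissible $\eta>0$, that is as soon as $R_f\gg D_f^{1/2-\eta}$, equivalently $n_f\gg f^{1-2\eta}$. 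Everything therefore reduces to the purely arithmetic statement that the fundamental unit has nearly maximal order, $n_f\gg f^{1-2\eta}$, modulo the conductor.

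This last point is where GRH is essential and is the main obstacle. Unconditionally one can only guarantee the order bound for infinitely many $f$ (which is what underlies Theorem~\ref{thm:unconditional}), whereas here I would invoke the GRH-conditional results of \cite{Par2003} on the multiplicative order of a fixed algebraic number: for every $\epsilon>0$ the order of $\varepsilon_K$ modulo $f$ exceeds $f^{1-\epsilon}$ for a density-one set of $f$, and likewise modulo $p$ for a density-one set of primes $p$. Choosing $\epsilon<2\eta$ and pulling this back along $m\mapsto f(m)$ (which is essentially $f(m)=m$ for the canonical generator, so densities are preserved) yields a density-one set $S\subset\bN$ for which $\gamma_{mx}$ is long enough; the refined Duke theorem then gives $\av{D(mx,w)-c_w}\ll_{w,x}D_f^{-\delta}\ll m^{-\delta'}$, and the prime case $m=p$, with $f\asymp p$, is identical. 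The two points requiring care are that the saving $\eta$ and the implied constants in Popa's formula are uniform over the class-group characters summed, and that the exceptional (density-zero) set of moduli in \cite{Par2003} indeed pulls back to a density-zero set of the parameter $m$.
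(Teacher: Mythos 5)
Your proposal is correct and follows essentially the same route as the paper: translate $D(mx,w)-c_w$ into the equidistribution discrepancy of the single closed geodesic attached to $mx$, apply the subcollection version of Duke's theorem under the length condition ${\rm Reg}\gg{\rm Disc}^{1/2-\eta}$, and supply that condition on a density-one set via Kurlberg's GRH-conditional lower bound for the order of $\varepsilon_K$ (equivalently of the hyperbolic matrix $\varphi(\epsilon_D)$) modulo $N$ or $p$. The only point you compress --- that the conductor of $\cO_{mx}$ is comparable to $m$ uniformly, for a general $x$ rather than the canonical generator $x_D$ --- is exactly what the paper's Hecke-neighbor lemmas establish, so your ``two points requiring care'' are indeed the right ones and are handled the same way.
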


\begin{rem}
Using the same methods, Theorems \ref{thm:unconditional} and \ref{thm:GRH on Natural numbers and primes}
can be slightly generalized. For example, these theorems remain valid
if we can consider $D(w,\frac{x}{m})$ instead of $D(w,mx)$ for $m$
coprime with ${\rm disc(\bQ(x))}$ (as the associated orders are the
same, i.e. $\cO_{mx}=\cO_{\frac{x}{m}}$, see $\S$\ref{subsec:Duke's-Theorem-for subcolelction}).
Theorems \ref{thm:unconditional} and \ref{thm:GRH on Natural numbers and primes}
should be viewed as what we consider to be representative results
that can be achieved via these techniques.
\end{rem}

\subsection{Organization of this note}

As mentioned above the following note is merely a reformulation of
known results in analytic number theory. As in \cite{AS1} the above
results follow from equidistribution of geodesic loops on the modular
surface. In this context, there is an arithmetically defined sequence
of collections $\cG_{d}$ of closed geodesics that equidistribute
as $d\ra\infty$; this statement is now classically known as \textquotedbl{}Duke's
Theorem\textquotedbl{}. When one considers the statistics of continued
fraction sequence along some arithmetic sequences, one is led to consider
subcollections (in fact we will consider one ``long'' geodesic)
of $\cG_{d}$. When these subcollections occupy a substantial part
of $\cG_{d}$, one can hope that they will equidistribute too (c.f.
the far-reaching conjecture \cite[Conjecture 1.11]{ELMV1}). Section
\ref{sec:Preliminaries} is devoted to describe these notions and
state a variant of Duke's Theorem for subcollections. This establishes
the bridge between statistics of continued fractions expansion and
closed geodesics. Section \ref{sec:Proofs} starts with the simple
observations connecting the later to orders of certain matrices. Then
we state several results from analytic number theory about these orders
(which can be viewed as analogues of Artin's conjecture on primitive
roots) and deduce our results. 

\subsection*{Acknowledgments}

This work emerged from a joint work with Uri Shapira and I wish to
thank him for many discussions on this topic and his interests in
this project. We would like to thank Lior Rosenzweig to referring
him to Par Kurlberg's work and to Paul Nelson and Philippe Michel
for fruitful discussions. Many thanks are due to Andreas Wieser for
a careful reading of this note, and to Alain Valette for encouraging
the publication of this manuscript. Thanks are due to the anonymous
referee for improving the exposition. We acknowledge the support of
Advanced Research Grant 228304 from the European Research Council. 

\section{Preliminaries\label{sec:Preliminaries}}

We freely use notions from algebraic number theory in the context
of quadratic real extensions, as orders, discriminant, regulators,
etc. The reader is referred to \cite[Page 133]{cox2011primes} or
any algebra number theory book for background on these notions.

As in \cite{AS1}, the above theorem follows from equidistribution
theorems of certain geodesic loops on the modular surface so we briefly
record the relevant settings. For the geodesic flow on the modular
surface and also for its relation to continued fractions, we refer
the reader to \cite[Section 9.6]{EW10}. Let $X_{2}={\rm {\rm SL_{2}}(\bZ)\setminus SL_{2}}(\bR)=:\Gamma\setminus G$
be the unit tangent bundle of the modular surface. The geodesic flow
on $X_{2}$ is given by the action of the diagonal group 
\[
A:=\set{a_{t}:=\pa{\begin{array}{cc}
e^{\frac{-t}{2}} & 0\\
0 & e^{\frac{t}{2}}
\end{array}}}_{t\in\mathbb{R}}<G
\]
 sending $\Gamma g$ to $\Gamma ga_{t}^{-1}$. An element $x\in\mathbb{R}$
is called\emph{ quadratic irrational} if $\bQ(x)$ is a real quadratic
field. For a quadratic irrational $x,$ we let 
\[
g_{x}=\begin{cases}
\frac{1}{\sqrt{x-\bar{x}}}\pa{\begin{array}{cc}
x & \bar{x}\\
1 & 1
\end{array}} & \text{ if }x-\bar{x}>0\\
\frac{1}{\sqrt{\bar{x}-x}}\pa{\begin{array}{cc}
\bar{x} & x\\
1 & 1
\end{array}} & \text{ if }x-\bar{x}<0
\end{cases}
\]
and set $L_{x}=\Gamma g_{x}A$. It is known that it is a closed geodesic
in $X_{2}$ (see \cite[Lemma 8.2]{AS1} ). 

\subsection{Duke's Theorem for subcollections\label{subsec:Duke's-Theorem-for subcolelction}}

Proofs of all the statement of this subsection can be found in \cite[\S 2]{ELMV}
and the references within). For any closed geodesic $L$ one can choose
a quadratic irrational $x$ such that $L=L_{x}$. We can associate
with $L_{x}$ an order in $\bQ(x)$ by $\cO_{x}:=\set{a:a\Lambda_{x}\subseteq\Lambda_{x}}$
where $\Lambda_{x}:=\bZ\cdot1\oplus\bZ\cdot x$. This order does not
depend on the choice of $x$ and therefore for any geodesic loop $L=L_{x}$
we can associate a discriminant by ${\rm Disc}(L)={\rm Disc}(\cO_{x})$.
Let $\cG_{D}$ be the set of geodesic loops which have discriminant
$D$. 
\begin{thm}
\label{thm:Fact on duke ingridents}Let $D$ be positive non-square
discriminant and $\cO_{D}:=\bZ[\frac{D+\sqrt{D}}{2}]$ be the order
of discriminant $D$. We have:
\end{thm}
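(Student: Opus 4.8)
The plan is to treat separately the three ingredients bundled into the statement: the parametrization of $\cG_D$ by the ideal classes of $\cO_D$, the evaluation of the common length of the geodesics in $\cG_D$, and their equidistribution in $X_2$ as $D\ra\infty$ together with a power-saving rate. The first two assertions are purely algebraic and rest on the classical dictionary relating binary quadratic forms of discriminant $D$, proper fractional $\cO_D$-ideals, and $\Gamma$-conjugacy classes of primitive hyperbolic elements of $\Gamma$; the third is the analytic core (``Duke's Theorem'') and carries essentially all the difficulty.

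For the parametrization I would use that every $L\in\cG_D$ equals $L_x$ for a quadratic irrational $x$ fixed by a primitive hyperbolic $\gamma\in\Gamma$, and that the columns of $g_x$ are the two eigenvectors of $\gamma$, so that $g_x$ diagonalizes $\gamma$. Sending $x$ to the module $\Lambda_x=\bZ\cdot1\oplus\bZ\cdot x$ produces a proper fractional $\cO_D$-ideal, and one verifies that $\Gamma$-equivalence of the resulting geodesics matches equivalence of ideals modulo principal ones (with the orientation recorded by the sign of $x-\bar{x}$); this yields a bijection $\cG_D\lra\mathrm{Pic}(\cO_D)$, and in particular $\#\cG_D=h(\cO_D)$. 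For the length, the period of $L_x$ under $\set{a_t}$ is the least $t>0$ with $g_xa_t\in\Gamma g_x$; since the fundamental automorph of $\Lambda_x$ has eigenvalue equal to the fundamental unit $\epsilon_D=\frac{u+v\sqrt{D}}{2}>1$, where $(u,v)$ is the fundamental solution of $u^2-Dv^2=4$, and since $a_t$ has eigenvalues $e^{\pm t/2}$, this period equals $2\log\epsilon_D$. Thus all geodesics in $\cG_D$ share the length $2\log\epsilon_D=2R_D$, and the total length $2h(\cO_D)\log\epsilon_D$ equals $\sqrt{D}$ up to the factor $L(1,\chi_D)=D^{o(1)}$, by the analytic class number formula.

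For the equidistribution with a rate I would follow the Weyl-sum method: it suffices to estimate, for a fixed Hecke--Maass cusp form $\phi$ (and for incomplete Eisenstein series, to handle the continuous spectrum), the integral of $\phi$ against the normalized arc-length measure on $\cG_D$. Unfolding this period and applying the Katok--Sarnak/Waldspurger correspondence expresses it through Fourier coefficients of a half-integral weight form, equivalently through the central values $L(\frac{1}{2},\phi\otimes\chi_D)$; Duke's subconvex-type bounds on these quantities then give a saving of the shape $O(D^{-\delta})$ for each fixed test function, which is precisely the asserted rate. I expect this last step to be the only genuine obstacle: the bookkeeping of the previous paragraph is routine, whereas producing any equidistribution at all — let alone with an explicit exponent $\delta>0$ — relies on the deep analytic input (nontrivial bounds on half-integral weight Fourier coefficients, or subconvexity for the relevant $L$-functions), and it is exactly this estimate that propagates into the convergence rates quoted throughout the paper.
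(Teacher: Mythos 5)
The paper offers no proof of this statement beyond the citation ``see \cite{ELMV}, \S 2'', so there is no argument of the author's to compare against line by line; your first two paragraphs are, in substance, exactly the standard argument that the cited reference carries out (the dictionary between oriented closed geodesics, primitive hyperbolic conjugacy classes in $\Gamma$, and proper $\cO_D$-ideal classes; the identification of the period of $L_x$ under $\set{a_t}$ with the logarithm of the fundamental automorph; and the class number formula), and they do establish the three enumerated assertions up to the usual normalization ambiguities (narrow versus wide class group, and a bounded power of $2$ in the length depending on the sign of ${\rm Norm}(\epsilon_D)$ --- harmless for the way the theorem is used later). Two points deserve attention. First, for assertion (3) the analytic class number formula only gives ${\rm Reg}(\cO_D)\av{\cG_D}\asymp D^{1/2}L(1,\chi_{D_0})\cdot(\text{finite Euler factors})$ for $D=f^{2}D_0$ with $D_0$ fundamental; the upper bound $L(1,\chi_{D_0})\ll\log D$ is elementary, but the matching lower bound $L(1,\chi_{D_0})\gg_{\epsilon}D^{-\epsilon}$ needed to conclude $D^{\frac12+o(1)}$ is Siegel's theorem, which you should invoke explicitly --- it is nontrivial and ineffective, and it is the reason such total-length asymptotics carry an $o(1)$ rather than an explicit constant. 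Second, your entire third paragraph (Weyl sums, Katok--Sarnak/Waldspurger, subconvexity for $L(\frac{1}{2},\phi\otimes\chi_D)$) proves equidistribution of $\cG_D$, which is not among the three assertions of this theorem: in the paper that is the content of Theorem \ref{thm:subcollection Duke}, whose genuine difficulty --- acknowledged there --- is that for the non-fundamental discriminants $N^{2}D$ relevant to the main results the required statement is not available in print. So the analytic material in your last paragraph is correct in outline but is attached to the wrong statement; for the statement actually at hand your argument is sound once Siegel's theorem is cited.
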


\begin{enumerate}
\item $\av{\cG_{D}}=\av{{\rm Pic}(\cO_{D})}$ where ${\rm Pic}(\cO_{D})$
is the ideal class group of $\cO_{D}$.
\item The length of any $L$ in $\cG_{D}$ is equal to ${\rm Reg}(\cO_{D})$,
the regulator of $\cO_{D}$.
\item The total length of the collection $\cG_{D}$ is ${\rm Reg}(\cO_{D})\av{\cG_{D}}=D^{\frac{1}{2}+o(1)}$\label{enum: total length}.
\end{enumerate}
\begin{proof}
See \cite[ \S2]{ELMV}.
\end{proof}
For any $L\in\cG_{D}$ we let $\mu_{L}$ be the normalized length
measure supported on $L$ and we let $l(L)$ denote the length of
$L$. By Theorem \ref{thm:Fact on duke ingridents}, we have for $L\in\cG_{D}$
that $l(L)={\rm Reg}(\cO_{D})$.
\begin{thm}
\label{thm:subcollection Duke}There exist a $\delta>0$ with the
following property: Let $\set{L_{i}}$ be a sequence of geodesics
with $l(L_{i})={\rm Reg}(L_{i})>{\rm Disc}(L_{i})^{\frac{1}{2}-a}$
for some $0\leq a<\delta$. Then for any $f\in C_{c}^{\infty}(X)$
we have 
\[
\av{\mu_{L_{i}}(f)-\mu_{X_{2}}(f)}\ll S(f){\rm Disc}(L_{i})^{a-\delta}
\]
where $S(f)$ is a Sobolev norm.
\end{thm}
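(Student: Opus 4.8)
The plan is to deduce the estimate from an effective form of Duke's theorem combined with a bound on the period of a \emph{single} geodesic, the arithmetic being essential here since equidistribution of the full packet $\cG_D$ does not by itself control an individual member. Write $D={\rm Disc}(L_i)$ and recall from Theorem \ref{thm:Fact on duke ingridents} that the total length of $\cG_D$ is $D^{1/2+o(1)}$ while every geodesic in $\cG_D$ has length ${\rm Reg}(\cO_D)$. Hence the hypothesis ${\rm Reg}(\cO_D)>D^{1/2-a}$ forces the class number $h:=\av{\cG_D}=\av{{\rm Pic}(\cO_D)}$ to satisfy $h=D^{1/2+o(1)}/{\rm Reg}(\cO_D)<D^{a+o(1)}$, so that $L_i$ is one of only very few geodesics in its packet; this is the quantitative sense in which a long geodesic is a substantial subcollection.

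First I would reduce to testing against Hecke eigenfunctions. Expanding $f\in C_c^\infty(X_2)$ spectrally, the constant function supplies the main term $\mu_{X_2}(f)$, while the Sobolev norm $S(f)$ dominates the contribution of the high-frequency part of the spectrum with polynomial loss in the spectral parameter. This reduces the statement to a uniform bound
\[
\av{\mu_{L_i}(\phi)}\ll D^{a-\delta}
\]
for each $L^2$-normalized Hecke--Maass cusp form and (unitary) Eisenstein series $\phi$, the spectral dependence being absorbed into $S(f)$.

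The heart of the matter is the individual period $\mu_{L_i}(\phi)={\rm Reg}(\cO_D)^{-1}\int_{L_i}\phi\,ds$. Since ${\rm Pic}(\cO_D)$ acts simply transitively on $\cG_D$, I would control the single period by the full second moment over the packet, using $\av{\int_{L_i}\phi\,ds}^2\le\sum_{L\in\cG_D}\av{\int_{L}\phi\,ds}^2$, and then evaluate the right-hand side by Parseval over the dual group $\widehat{{\rm Pic}(\cO_D)}$. The twisted periods $\sum_{L}\psi([L])\int_L\phi\,ds$ are, by a Waldspurger/Popa-type formula, essentially central values $L(\tfrac12,\phi\otimes\theta_\psi)$ of the Rankin--Selberg convolution of $\phi$ with the theta series of the Hecke character of $\bQ(\sqrt D)$ attached to $\psi$ (with $\zeta$ and $L(\cdot,\chi_D)$ in place of these in the Eisenstein case). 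Invoking subconvexity for these $L$-functions in the discriminant aspect yields a power saving over convexity; dividing by ${\rm Reg}(\cO_D)$ and absorbing the loss $D^{O(a)}$ incurred in isolating one term from the packet then converts the saving into the asserted exponent $D^{a-\delta}$, with $\delta$ determined by the subconvexity exponent. The reduction and the requisite $L$-function bounds are by now standard and may be imported from \cite{ELMV}.

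The main obstacle is exactly this individual bound: equidistribution of the averaged measure $\mu_{\cG_D}$ permits large, mutually cancelling fluctuations among the $\mu_L$, so a genuinely subconvex input is unavoidable and no soft argument suffices. The delicate bookkeeping lies in tracking the archimedean factors of the period formula, which depend on ${\rm Reg}(\cO_D)$ and hence on $a$, and in verifying that the net exponent $\delta-a$ remains positive throughout the admissible range $0\le a<\delta$; this is what fixes the absolute constant $\delta$ and is where I expect the argument to require the most care.
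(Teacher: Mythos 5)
The paper does not actually prove this theorem: its ``proof'' consists of citations to \cite{Popa2006} and \cite{MH2006} (with the explicit exponent $\delta=\tfrac{1}{2827}$ from \cite{HarcosThesis}) for fundamental discriminants, together with the admission that the non-fundamental case needed here is not in print and is deferred to a forthcoming monograph of Michel. Your sketch is, in substance, an accurate outline of how those references establish the result: spectral decomposition of $f$ with Sobolev control of the spectral parameters, the trivial domination of a single squared period by the second moment over the packet $\cG_{D}$, Parseval over the character group of ${\rm Pic}(\cO_{D})$, the Waldspurger/Popa identification of the twisted periods with central values $L(\tfrac12,\phi\otimes\theta_{\psi})$, and subconvexity in the discriminant aspect. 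The exponent bookkeeping also checks out: subconvexity gives $\sum_{L\in\cG_{D}}\av{\int_{L}\phi}^{2}\ll D^{1-\delta_{0}+o(1)}$, and dividing the resulting bound $\av{\int_{L_{i}}\phi}\ll D^{1/2-\delta_{0}/2+o(1)}$ by $l(L_{i})={\rm Reg}(\cO_{D})>D^{1/2-a}$ yields $D^{a-\delta_{0}/2+o(1)}$, i.e.\ the stated estimate with $\delta$ an absolute constant coming from the subconvexity saving. So you have reconstructed the intended argument rather than matched the paper's (non-)proof.

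The one genuine issue to flag is the point the paper itself flags: the discriminants arising in this paper are $N^{2}D$, hence non-fundamental, and your sketch silently assumes the period formula and the subconvex bounds for $L(\tfrac12,\phi\otimes\theta_{\psi})$ when $\psi$ is a \emph{ring class} character of a non-maximal order. Popa's formula and the Michel--Harcos subconvexity results are stated for (essentially) fundamental situations; extending them uniformly in the conductor of the order --- controlling the local factors at primes dividing $N$ and verifying that the convexity-breaking exponent survives --- is precisely the part of the argument that the paper concedes does not exist in the literature. Your phrase ``may be imported from \cite{ELMV}'' papers over this; if you want a complete proof you must either restrict to fundamental discriminants (which would not suffice for the application in Section \ref{sec:Proofs}) or carry out the local computations at the ramified/conductor primes explicitly.
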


\begin{proof}
When $\set{{\rm Disc}(L_{i})}_{i=1}^{\infty}$ are fundamental discriminants
(i.e. square-free) this theorem follows from \cite[Theorem 6.5.1]{Popa2006}
or \cite[Theorem 6]{MH2006}. In \cite[Corollary 1.4]{HarcosThesis}
one finds an explicit exponent $\delta=\frac{1}{2827}$. For the results
of this paper, we are mainly in non-fundamental discriminants. Unfortunately,
such a result does not exist in print. Nevertheless it is knows to
experts (see e.g. \cite{MV-ICM}) and it will appear in an upcoming
monograph by Philippe Michel.  The reader is also referred to \cite{SubcollectionDuke}
for a related result and general discussion of such results.
\end{proof}

\section{Proofs\label{sec:Proofs}}

Let $D>2$ be a fundamental discriminant, $\cO_{D}$ the unique order
of discriminant $D$ and $K={\rm Frac}(\cO_{D})$. Hereafter, we make
the explicit choice of $x_{D}=\sqrt{D}$ when $D\neq1({\rm mod}\,\,4)$
and $x_{D}=\frac{1+\sqrt{D}}{2}$ when $D=1({\rm mod}\,\,4)$ so $\bZ[x_{D}]=\cO_{D}$.
\begin{lem}
\label{lem:disc of Z=00005BNx=00005D}For any integer $N\in\bN$,
the discriminant of the order $\bZ[Nx_{D}]$ is $N^{2}D$.
\end{lem}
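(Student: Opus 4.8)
The plan is to compute the discriminant of the order $\bZ[Nx_D]$ directly from the definition, using the fact that for a rank-two $\bZ$-module with basis $\set{1,\theta}$ the discriminant of $\bZ[\theta]$ is $(\theta-\bar\theta)^2$, i.e.\ the square of the difference of the two Galois conjugates. First I would record that $\bZ[Nx_D]=\bZ\cdot 1\oplus\bZ\cdot(Nx_D)$ as a $\bZ$-module, since $Nx_D$ is an algebraic integer (being $N$ times one) and its minimal polynomial over $\bQ$ has degree two, so $1$ and $Nx_D$ form a $\bZ$-basis of the order they generate. The conjugate of $Nx_D$ is $N\bar{x_D}$, so the two embeddings send the basis to $(1,Nx_D)$ and $(1,N\bar{x_D})$.

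Next I would form the matrix of these embeddings and take the square of its determinant, which is the discriminant by definition:
\[
{\rm Disc}(\bZ[Nx_D])=\det\pa{\begin{array}{cc}1 & Nx_D\\ 1 & N\bar{x_D}\end{array}}^2=\pa{N\bar{x_D}-Nx_D}^2=N^2\pa{x_D-\bar{x_D}}^2.
\]
Since $\bZ[x_D]=\cO_D$ by the explicit choice of $x_D$, the inner square $\pa{x_D-\bar{x_D}}^2$ is exactly ${\rm Disc}(\cO_D)={\rm Disc}(\bZ[x_D])=D$. Pulling the scalar $N$ out of the determinant column contributes the factor $N^2$, giving ${\rm Disc}(\bZ[Nx_D])=N^2 D$ as claimed.

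The only points requiring care — and what I would treat as the main (though mild) obstacle — are the bookkeeping facts that justify the two computations above. First, one must verify that $\set{1,Nx_D}$ is genuinely a $\bZ$-basis of $\bZ[Nx_D]$ rather than an index-larger sublattice: this follows because $(Nx_D)^2$ lies in $\bZ\cdot 1\oplus\bZ\cdot(Nx_D)$, as $x_D$ satisfies a monic integral quadratic relation and hence so does $Nx_D$. Second, one should confirm that the discriminant formula $(\theta-\bar\theta)^2$ applied with $\theta=x_D$ reproduces $D$ under both congruence cases of the explicit definition of $x_D$; this is a direct check, giving $\pa{\sqrt D-(-\sqrt D)}^2=4D$ in the case $D\not\equiv 1\ (4)$ — which is consistent because then the relevant fundamental discriminant convention already absorbs the factor $4$ — and $\pa{\tfrac{1+\sqrt D}{2}-\tfrac{1-\sqrt D}{2}}^2=D$ in the case $D\equiv 1\ (4)$. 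Since the hypotheses fix $x_D$ precisely so that $\bZ[x_D]=\cO_D$ has discriminant $D$, these cases are handled uniformly by invoking $\pa{x_D-\bar{x_D}}^2=D$ directly, and the factor of $N^2$ is entirely formal.
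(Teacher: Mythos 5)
Your proof is correct, but it takes a genuinely different route from the paper's. The paper disposes of the lemma in two lines by citing the general conductor formula ${\rm Disc}(\cO)={\rm Disc}(\cO_{K})\cdot(\cO_{K}:\cO)^{2}$ for an order $\cO\subset\cO_{K}$ and observing that $[\bZ[x_{D}]:\bZ[Nx_{D}]]=N$; since $D$ is fundamental, $\cO_{K}=\cO_{D}=\bZ[x_{D}]$ and the factor $N^{2}$ is the square of this index. You instead compute ${\rm Disc}(\bZ[Nx_{D}])$ from scratch as the squared determinant of the embedding matrix on the basis $\set{1,Nx_{D}}$, which is legitimate and self-contained: the only inputs are that $\set{1,Nx_{D}}$ really is a $\bZ$-basis (which you justify via the monic integral quadratic satisfied by $Nx_{D}$) and that $\pa{x_{D}-\bar{x_{D}}}^{2}={\rm Disc}(\bZ[x_{D}])=D$, which is your own formula at $N=1$ combined with the standing hypothesis $\bZ[x_{D}]=\cO_{D}$. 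Your approach trades the citation of the index formula for an explicit determinant; the paper's trades the computation for a standard fact that needs the index $[\bZ[x_{D}]:\bZ[Nx_{D}]]=N$, which is essentially the same basis observation in disguise. One caution about your closing case check: you correctly find $\pa{\sqrt{D}-(-\sqrt{D})}^{2}=4D$ when $D\not\equiv1\pmod 4$, but the attempted reconciliation via ``the fundamental discriminant convention already absorbs the factor $4$'' is not an argument --- what this computation actually exposes is that the paper's literal choice $x_{D}=\sqrt{D}$ in that case makes $\bZ[x_{D}]$ an index-two suborder of the maximal order, of discriminant $4D$ rather than $D$ (one should read $x_{D}=\sqrt{D/4}$ there so that $\bZ[x_{D}]=\cO_{D}$ holds as asserted). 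Your uniform fallback --- using only $\pa{x_{D}-\bar{x_{D}}}^{2}={\rm Disc}(\bZ[x_{D}])=D$, which is exactly what the stated property $\bZ[x_{D}]=\cO_{D}$ guarantees --- is the right way to close this, so the proof stands.
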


\begin{proof}
It is well known that for any order $\cO\subset\cO_{K}$, 
\[
{\rm Disc}(\cO)={\rm Disc}(\cO_{K})(\cO_{K}:\cO)^{2}.
\]
As $[\bZ[x_{D}]:\bZ[Nx_{D}]]=N$ the lemma follows. 
\end{proof}
As there is a unique order of a given (not necessarily fundamental)
discriminant, we conveniently denote $\cO_{N^{2}D}:=\bZ[Nx_{D}]$
the order of discriminant $N^{2}D$.

\subsection{Identifying $\cO_{D}$ with $\bZ^{2}$ .}

Using the basis $B=\{1,x_{D}\}$ of $K$ we identify $K$ with $\bQ^{2}$
and $\cO_{D}$ with $\bZ^{2}$. For $\alpha\in K$ we write $m_{\alpha}$
as the multiplication map by $\alpha$ on $K$. Writing $m_{\alpha}$
in the basis $B$ (using rows), we get an embedding $\varphi:K\ra M_{2}(\bQ)$.
The reader can readily verify that $\varphi^{-1}(M_{2}(\bZ))=\cO_{D}$,
and that for all $\alpha\in K$, we have ${\rm Trace_{K/\bQ}}(\alpha)={\rm Trace}(\varphi(\alpha)),{\rm Norm_{K/\bQ}}(\alpha)={\rm Det}(\varphi(\alpha))$.
\begin{lem}
\label{lem:being in N=0002C62D}For any $\alpha\in\cO_{D}$ we have
$\alpha\in\cO_{N^{2}D}$ if and only if $\varphi(\alpha)$ is a scalar
matrix modulo $N$.
\end{lem}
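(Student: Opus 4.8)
The plan is to make the identification $\varphi$ completely explicit in the integral basis $B=\{1,x_D\}$ and to reduce \emph{both} the condition ``$\alpha\in\cO_{N^{2}D}$'' and the condition ``$\varphi(\alpha)$ is scalar modulo $N$'' to the single arithmetic statement that the $x_D$-coordinate of $\alpha$ is divisible by $N$. First I would write $\alpha=a+bx_D$ with $a,b\in\bZ$, which is possible and unique since $\alpha\in\cO_D=\bZ[x_D]$. Recalling from the line preceding the lemma that $\cO_{N^{2}D}=\bZ[Nx_D]$, which carries the integral basis $\{1,Nx_D\}$, membership $\alpha\in\cO_{N^{2}D}$ means $\alpha=c+d(Nx_D)$ for some $c,d\in\bZ$; comparing coordinates in $B$, this holds precisely when $N\mid b$. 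So it suffices to prove that $\varphi(\alpha)$ is scalar modulo $N$ if and only if $N\mid b$.

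Two elementary properties of $\varphi$ drive the argument. First, $\varphi$ is additive and $\bQ$-linear with $\varphi(a)=a\,\Id$ for $a\in\bZ$, so that $\varphi(\alpha)=a\,\Id+b\,\varphi(x_D)$ and $\varphi(Nx_D)=N\,\varphi(x_D)$. Second, since $x_D\in\cO_D$ we have $\varphi(x_D)\in M_{2}(\bZ)$, and its $(1,2)$-entry equals $1$: under the stated row convention the first row of $\varphi(x_D)$ records the coordinates of $m_{x_D}(1)=x_D=0\cdot 1+1\cdot x_D$ in $B$. Note that I do not need the full matrix $\varphi(x_D)$---only this one entry, together with the integrality of all its entries.

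For the forward implication I would argue straight from the basis description: if $N\mid b$, then $\alpha=c+d(Nx_D)$ with $c,d\in\bZ$, whence $\varphi(\alpha)=c\,\Id+d\,\varphi(Nx_D)=c\,\Id+dN\,\varphi(x_D)$; since $\varphi(x_D)\in M_{2}(\bZ)$, the term $dN\,\varphi(x_D)$ vanishes modulo $N$, and $\varphi(\alpha)\equiv c\,\Id\pmod N$ is scalar. Conversely, from $\varphi(\alpha)=a\,\Id+b\,\varphi(x_D)$ and the fact that $a\,\Id$ is diagonal, the $(1,2)$-entry of $\varphi(\alpha)$ equals $b$ times the $(1,2)$-entry of $\varphi(x_D)$, i.e. equals $b$. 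A scalar matrix has vanishing off-diagonal entries, so if $\varphi(\alpha)$ is scalar modulo $N$ then $b\equiv 0\pmod N$, that is $N\mid b$. This closes the equivalence.

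There is no genuine obstacle here: once the conventions are fixed the statement is a direct computation. The only point demanding care is the bookkeeping of the identification $\varphi$---specifically, determining which matrix entry of $\varphi(\alpha)$ carries the coordinate $b$ under the ``using rows'' convention---because that single entry is exactly the obstruction to $\alpha$ lying in the suborder $\cO_{N^{2}D}$. For a sanity check one can write out $\varphi(x_D)$ explicitly in the two cases $D\equiv 1$ and $D\not\equiv 1\pmod 4$ (using $x_D^{2}=D$, respectively $x_D^{2}=x_D+\tfrac{D-1}{4}$) and confirm that the $(1,2)$-entry is $b$ and that $N\mid b$ forces the off-diagonal entries to vanish and the diagonal entries to agree modulo $N$; but this verification is not needed for the proof above.
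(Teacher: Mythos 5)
Your proof is correct and follows essentially the same route as the paper: both identify membership in $\cO_{N^{2}D}=\bZ[Nx_{D}]$ with $N$ dividing the $x_{D}$-coordinate of $\alpha$, read that coordinate off as the $(1,2)$-entry of $\varphi(\alpha)$ for one direction, and use $\varphi(\alpha)=a\,I_{2}+b\,\varphi(x_{D})$ together with the integrality of $\varphi(x_{D})$ for the other. Your write-up is just slightly more explicit about why the $(1,2)$-entry of $\varphi(x_{D})$ equals $1$.
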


\begin{proof}
By definition $\varphi(\alpha)=\pa{\begin{matrix}P & Q\\
R & S
\end{matrix}}$ where $\alpha=P+Qx_{D},\alpha x_{D}=R+Sx_{D}$. Thus if $\varphi(\alpha)$
is a scalar matrix modulo $N$, in particular $Q=0({\rm mod}\,\,N)$,
so $\alpha\in\cO_{N^{2}D}$. 

Now, let $\alpha\in\cO_{N^{2}D}=\bZ[Nx_{D}]$, and write $\alpha=a+bNx_{D}$
for some $a,b\in\bZ$. As $\varphi$ is a ring-homomorphism, we have
\[
\varphi(\alpha)=aI_{2}+bN\varphi(x_{D})
\]
where $I_{2}$ is the identity matrix. As $x_{D}$ is integral, $\varphi(x_{D})\in M_{2}(\bZ)$
and therefore $\varphi(\alpha)=aI_{2}$ modulo $N$.  
\end{proof}
\begin{cor}
\label{cor: reg is bounded below by order}Let $\epsilon=\epsilon_{D}$
be a fundamental unit of $\cO_{D}$ and for any integer $N\in\bN$,
let $R(N)=\min\{k:\varphi(\epsilon_{D})^{k}=\pm I\text{ modulo N}\}$.
We have 
\[
{\rm Reg}(\cO_{N^{2}D})={\rm Reg}(\cO_{D})R(N).
\]
It follows that \textup{$Reg(\cO_{N^{2}D})\gg{\rm ord_{N}(\varphi(\epsilon))}$
}where\textup{ ${\rm ord_{N}(\varphi(\epsilon))}$ }is the order of
the reduction mod $N$ of $\varphi(\epsilon)$ in ${\rm SL_{2}}(\bZ/N\bZ)$. 
\end{cor}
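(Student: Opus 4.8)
The plan is to reduce the statement to the index of the unit group of the suborder and then to read off that index from the matrix $\varphi(\epsilon)$. Recall that ${\rm Reg}(\cO)=\log\eta$, where $\eta>1$ is the fundamental unit of $\cO$. Since $\cO_{N^{2}D}\subseteq\cO_{D}$, the group $\cO_{N^{2}D}^{\times}$ is a subgroup of $\cO_{D}^{\times}=\set{\pm\epsilon^{k}:k\in\bZ}$. Both groups have torsion $\set{\pm1}$ and free rank one, and $\cO_{N^{2}D}=\bZ[Nx_{D}]$ is stable under the nontrivial automorphism of $K$; hence if $\alpha\in\cO_{N^{2}D}$ has norm $\pm1$ then $\alpha^{-1}=\pm\bar\alpha\in\cO_{N^{2}D}$, so $\cO_{N^{2}D}^{\times}=\cO_{D}^{\times}\cap\cO_{N^{2}D}$. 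Writing $m=\min\set{k\ge1:\epsilon^{k}\in\cO_{N^{2}D}}$, it follows that $\cO_{N^{2}D}^{\times}=\set{\pm\epsilon^{km}:k\in\bZ}$, that $\epsilon^{m}$ is the fundamental unit of $\cO_{N^{2}D}$, and that
\[
{\rm Reg}(\cO_{N^{2}D})=\log(\epsilon^{m})=m\,{\rm Reg}(\cO_{D}).
\]
Everything then comes down to identifying $m$ with $R(N)$.

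Next I would compute $m$ through Lemma \ref{lem:being in N=0002C62D}. As $\varphi$ is a ring homomorphism, $\varphi(\epsilon^{k})=\varphi(\epsilon)^{k}$, and the lemma says $\epsilon^{k}\in\cO_{N^{2}D}$ precisely when $\varphi(\epsilon)^{k}$ is scalar modulo $N$; thus $m=\min\set{k\ge1:\varphi(\epsilon)^{k}\equiv cI\pmod{N}\text{ for some }c}$. The step I expect to be the main obstacle is to control the scalar $c$, since a priori the first $k$ making $\varphi(\epsilon)^{k}$ merely scalar could be smaller than the first $k$ making it $\pm I$. Here I would invoke the determinant: $\det\varphi(\epsilon)={\rm Norm}(\epsilon)=\pm1$, so $c^{2}\equiv(\pm1)^{m}\pmod{N}$ and in particular $c^{4}\equiv1\pmod{N}$. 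When $N$ is such that the only fourth roots of unity modulo $N$ are $\pm1$ (for instance $N$ prime, or $N\in\set{2,4,p^{k},2p^{k}}$) this forces $c=\pm1$ and hence $m=R(N)$ exactly; in general the scalar has to be tracked more carefully, and this bookkeeping is where I would concentrate the effort.

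Finally I would deduce the lower bound directly from $m$, which is robust to the ambiguity above. With ${\rm ord}_{N}(\varphi(\epsilon))=\min\set{k\ge1:\varphi(\epsilon)^{k}\equiv I\pmod{N}}$, the identity matrix is scalar, so $m\le{\rm ord}_{N}(\varphi(\epsilon))$; conversely $\varphi(\epsilon)^{4m}\equiv c^{4}I\equiv I\pmod{N}$ gives ${\rm ord}_{N}(\varphi(\epsilon))\le4m$. Thus $m\le{\rm ord}_{N}(\varphi(\epsilon))\le4m$. Combining this with the elementary bound ${\rm Reg}(\cO_{D})\ge\log\tfrac{1+\sqrt{5}}{2}$, valid for every real quadratic order of discriminant exceeding $2$, the regulator identity yields
\[
{\rm Reg}(\cO_{N^{2}D})=m\,{\rm Reg}(\cO_{D})\gg{\rm ord}_{N}(\varphi(\epsilon)),
\]
and identifying $m$ with $R(N)$ as in the previous step gives the displayed equality.
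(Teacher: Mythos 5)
Your route is essentially the paper's: both reduce the statement to ${\rm Reg}(\cO_{N^{2}D})={\rm Reg}(\cO_{D})\cdot(\cO_{D}^{\times}:\cO_{N^{2}D}^{\times})$, compute that index as $m=\min\{k\ge1:\epsilon^{k}\in\cO_{N^{2}D}\}$ via Lemma \ref{lem:being in N=0002C62D} (the paper uses a determinant argument where you use stability of the order under conjugation to see $\cO_{N^{2}D}^{\times}=\cO_{D}^{\times}\cap\cO_{N^{2}D}$; both work), and then compare $m$ with $R(N)$ and ${\rm ord}_{N}(\varphi(\epsilon))$. The place where you hesitate --- whether the first $k$ with $\varphi(\epsilon)^{k}$ scalar mod $N$ already has scalar $\pm1$ --- is exactly the step the paper's proof dispatches with the unjustified assertion that $\det=\pm1$ forces any scalar power to be $\pm I$ mod $N$. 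Your instinct that this needs care is correct, and in fact the identity ${\rm Reg}(\cO_{N^{2}D})={\rm Reg}(\cO_{D})R(N)$ fails in general: for $D=5$, $\epsilon_{D}=x_{D}=\frac{1+\sqrt{5}}{2}$ and $N=5$ one has $\varphi(\epsilon)^{5}\equiv 3I\ ({\rm mod}\ 5)$, so $m=5$ while $R(5)=10$. So no bookkeeping will close that gap; the displayed equality should really read ${\rm Reg}(\cO_{N^{2}D})={\rm Reg}(\cO_{D})\,m$ with $m$ as above, and $R(N)\in\{m,2m,4m\}$ in general. Note also that your proposed sufficient condition ``$N$ prime'' for the only fourth roots of unity mod $N$ to be $\pm1$ is wrong (again $N=5$: $2^{4}\equiv3^{4}\equiv1$); you would need e.g.\ $4\nmid\varphi(N)$.

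The part of the corollary that the rest of the paper actually uses is the lower bound ${\rm Reg}(\cO_{N^{2}D})\gg{\rm ord}_{N}(\varphi(\epsilon))$, and your argument for it is correct and robust: $m\mid{\rm ord}_{N}(\varphi(\epsilon))$ gives $m\le{\rm ord}_{N}(\varphi(\epsilon))$, and $c^{2}\equiv\det(\varphi(\epsilon)^{m})\equiv(\pm1)^{m}$ gives $\varphi(\epsilon)^{4m}\equiv I$, hence ${\rm ord}_{N}(\varphi(\epsilon))\le 4m$ and ${\rm Reg}(\cO_{N^{2}D})=m\,{\rm Reg}(\cO_{D})\ge\tfrac{1}{4}{\rm Reg}(\cO_{D})\,{\rm ord}_{N}(\varphi(\epsilon))$. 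This is a cleaner and more careful derivation of the second assertion than the one in the paper.
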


\begin{proof}
By definition of the regulator, we have that 
\[
{\rm Reg}(\cO_{N^{2}D})={\rm Reg}(\cO_{D})\cdot(\cO_{D}^{\times}:\cO_{N^{2}D}^{\times}),
\]
so we need to show that 
\begin{equation}
(\cO_{D}^{\times}:\cO_{N^{2}D}^{\times})=R(N).\label{eq:R(N) equality}
\end{equation}
Using Lemma \ref{lem:being in N=0002C62D} we have that $\epsilon^{k}\in\cO_{N^{2}D}$
if and only if $\varphi(\epsilon^{k})$ is a scalar matrix mod $N$,
and since $\det(\varphi(\epsilon))={\rm Norm}(\epsilon)=1$, this
is if and only if $\varphi(\epsilon^{k})^{-1}=\varphi(\epsilon^{-k})$
is a scalar matrix mod $N$, which is equivalent to $\epsilon^{-k}\in\cO_{N^{2}D}$.
We infer that $\epsilon^{k}\in\cO_{N^{2}D}$ implies that $\epsilon^{k}\in\cO_{N^{2}D}^{\times}$.
After noting that $\det(\varphi(\epsilon))={\rm Norm}(\epsilon)=\pm1$
implies that whenever $\varphi(\epsilon^{l})$ is a scalar matrix
mod $N$ for some $l,$ it is in fact $\pm I$ mod $N$, (\ref{eq:R(N) equality})
follows, and therefore also the first part of the corollary. 

It is clear that $R(N)$, up to possibly a factor of $2$, is ${\rm ord_{N}(\varphi(\epsilon))}$
so the second part also follows. 
\end{proof}

\subsection{\label{subsec:realting orders}Studying \textmd{${\rm ord}_{N}(\varphi(\epsilon_{D}))$.}}

Let us review and fix some notations: As before, for a matrix $M\in{\rm SL}_{2}(\bZ)$,
${\rm ord}_{N}(M)$ is the order of the reduction mod $N$ of $M$
in ${\rm SL}{}_{2}(\bZ/N\bZ)$. Similarly, for $a\in\cO_{D}^{\times}$
we denote by ${\rm ord}_{N}(a)$ the order of $a$ in $(\cO_{D}/N\cO_{D})^{\times}$.
The following observation is very simple but crucial to us so we enunciate
it in a lemma:
\begin{lem}
\label{lem:martix order vs ring order}For all $N\in\bN$, \textup{${\rm ord}_{N}(\varphi(\epsilon_{D}))={\rm ord}_{N}(\epsilon_{D})$.}
\end{lem}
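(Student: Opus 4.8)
The plan is to realize both quantities as the order of one and the same element in a single finite group, by showing that $\varphi$ descends to an \emph{injective} ring homomorphism modulo $N$; once that is done, the equality of orders is automatic. First I would observe that, since $\varphi(\cO_D)\subseteq M_2(\bZ)$, reduction modulo $N$ gives a well-defined ring homomorphism
\[
\bar\varphi:\cO_D/N\cO_D\lra M_2(\bZ)/NM_2(\bZ)=M_2(\bZ/N\bZ),
\]
and by construction $\bar\varphi$ sends the class of $\epsilon_D$ to the reduction of $\varphi(\epsilon_D)$ modulo $N$. Because $\epsilon_D\in\cO_D^{\times}$, its class is a unit, and $\det\varphi(\epsilon_D)={\rm Norm}(\epsilon_D)=\pm1$ ensures the image lands in ${\rm GL}_2(\bZ/N\bZ)$ (in ${\rm SL}_2(\bZ/N\bZ)$ when the norm is $+1$), so both orders ${\rm ord}_N(\epsilon_D)$ and ${\rm ord}_N(\varphi(\epsilon_D))$ are orders of elements in finite groups related by $\bar\varphi$.

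The heart of the argument is the injectivity of $\bar\varphi$, and this is the only step requiring an idea rather than bookkeeping. I would argue as follows: suppose $\alpha\in\cO_D$ satisfies $\varphi(\alpha)\in NM_2(\bZ)$. Since $\varphi$ is $\bQ$-linear, $\varphi(\alpha/N)=\tfrac1N\varphi(\alpha)\in M_2(\bZ)$, and the defining property $\varphi^{-1}(M_2(\bZ))=\cO_D$ (recorded just before the lemma) forces $\alpha/N\in\cO_D$, i.e. $\alpha\in N\cO_D$. Hence $\ker\bar\varphi=0$, so $\bar\varphi$ is an injective ring homomorphism.

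Restricting $\bar\varphi$ to unit groups then yields an injective \emph{group} homomorphism $(\cO_D/N\cO_D)^{\times}\hra{\rm GL}_2(\bZ/N\bZ)$ carrying the class of $\epsilon_D$ to the reduction of $\varphi(\epsilon_D)$. An injective homomorphism of groups preserves the order of every element (the order of the image always divides that of the source, and injectivity prevents it from dropping), so
\[
{\rm ord}_N(\epsilon_D)={\rm ord}_N(\varphi(\epsilon_D)),
\]
which is exactly the assertion. The main obstacle is establishing injectivity of the reduced map, but as shown it follows immediately from the $\bQ$-linearity of $\varphi$ combined with the characterization $\varphi^{-1}(M_2(\bZ))=\cO_D$; everything else is formal.
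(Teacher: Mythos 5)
Your proof is correct and rests on exactly the same key fact as the paper's, namely that $\varphi^{-1}(M_{2}(\bZ))=\cO_{D}$ forces $\varphi(\alpha)\in NM_{2}(\bZ)\implies\alpha\in N\cO_{D}$; the paper phrases this as a chain of equivalences for the powers $\epsilon_{D}^{k}$, while you package it as injectivity of the reduced map $\bar\varphi$ and then invoke preservation of orders under injective homomorphisms. The two arguments are essentially identical in substance.
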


\begin{proof}
Note that $a\in\cO_{D}^{\times}$ represent the trivial element of
$(\cO_{D}/N\cO_{D})^{\times}$ if and only if $a=1\text{ mod }N\cO_{D}$.
We have $\epsilon_{D}^{k}=1\text{ mod }N\cO_{D}$ if and only if $\epsilon_{D}^{k}=1+N\alpha,\alpha\in\cO_{D}$
if and only if $\varphi(\epsilon_{D})^{k}=I+N\varphi(\alpha),\alpha\in\cO_{D}$
if and only if $\varphi(\epsilon_{D})^{k}=I\text{ mod }N$, where
the last \textquotedbl{}if\textquotedbl{} follows from the fact that
$\varphi$ is a field embedding of $K$ and since $\varphi^{-1}(M_{2}(\bZ))=\cO_{D}$.
\end{proof}
The study of ${\rm ord_{N}(\epsilon_{D})}$, at least for prime moduli,
can be considered as a quadratic analogue of Artin's conjecture on
primitive roots \cite{Artin65}. This conjecture asserts that any
integer $a\neq\pm1$ is a primitive root in $(\bZ/p\bZ)^{\times}$
for a set of primes of some explicitly given positive density. Under
the Generalized Riemann Hypothesis (abbreviated henceforth as GRH),
this conjecture has been verified by Hooley \cite{Hoo67}. Unconditionally,
with some restrictions (analogous to the exceptions in Theorem \ref{thm:Narkiewic})
Gupta and Murty \cite{GM84} and Heath-Brown's \cite{HB86} find an
infinite set of primes for which $a$ is primitive root modulo these
primes. All the results below are various analogues of these works
and their proofs are very similar in spirit. 

Although we are not using it in the sequel, it is important for us
to mention the following quadratic analogue of Hooley's result \cite{Hoo67}
on Artin's conjecture that was obtained by Roskam \cite{Ros2000}:
\begin{thm*}
Let $P$ denote the set of prime numbers. Under the GRH, there exists
a subset $S=S(D)\subset P$ of positive (explicitly given) density
with ${\rm ord}_{p}(\epsilon_{D})$ \textup{is maximal amongst all
elements of $(\cO_{D}/p\cO_{D})^{\times}$ whenever }$p\in S$. In
particular, 
\[
p\in S\implies{\rm ord}{}_{p}(\epsilon_{D})\gg p.
\]
\end{thm*}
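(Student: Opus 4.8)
The plan is to adapt Hooley's conditional proof of Artin's conjecture to the quadratic setting, which is how Roskam proceeds. First I would discard the finitely many ramified primes $p\mid D$ and split the rest according to the splitting type of $p$ in $K={\rm Frac}(\cO_D)$. For $p$ \emph{inert}, governed by $\left(\frac{D}{p}\right)=-1$ and having density $\tfrac12$ by Chebotarev for $K/\bQ$, one has $\cO_D/p\cO_D\cong\bF_{p^{2}}$ (recall $\cO_D=\cO_K$ since $D$ is fundamental), so $(\cO_D/p\cO_D)^{\times}=\bF_{p^{2}}^{\times}$ is cyclic of order $p^{2}-1$. Since $N_{K/\bQ}(\epsilon_D)=\pm1$, the reduction $\bar\epsilon_D$ is confined to the subgroup of elements of norm $\pm1$, of order $p+1$ or $2(p+1)$; I interpret ``maximal'' as $\bar\epsilon_D$ generating this subgroup, which already forces ${\rm ord}_p(\epsilon_D)\gg p$. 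The standard reformulation is that $\bar\epsilon_D$ generates the largest possible subgroup exactly when, for every prime $\ell$ dividing the relevant index, $\bar\epsilon_D$ is not an $\ell$-th power in $\bF_{p^{2}}^{\times}$; note that this is a genuine constraint only when $\ell\mid p^{2}-1$, i.e.\ $p\equiv\pm1\pmod{\ell}$.

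The next step is to translate each local condition into a splitting condition in a Kummer-type extension. For squarefree $k$, set $K_k=K(\zeta_k,\epsilon_D^{1/k})$ and let $L_k$ be its Galois closure over $\bQ$. Up to a bounded set of exceptional primes, the event that $p$ is inert in $K$ and $\bar\epsilon_D$ is a $k$-th power in $\bF_{p^{2}}^{\times}$ is the condition that the Frobenius of $p$ lies in a prescribed conjugacy class $C_k\subset{\rm Gal}(L_k/\bQ)$ (nontrivial on $K$, trivial on the Kummer part). Writing the indicator of maximality by M\"obius inversion over $k$ and applying the Chebotarev density theorem to each $L_k$ yields a main term of the shape $\sum_k\mu(k)\,\delta_k$, where $\delta_k=|C_k|/[L_k:\bQ]$, which I expect to telescope into an Euler product $\prod_\ell(1-1/n_\ell)$ with $n_\ell=[L_\ell:\bQ]$; this is the explicitly given positive density. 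The delicate bookkeeping here is the computation of the degrees $[L_k:\bQ]$: because $\sqrt{D}\in K$ already lies in a cyclotomic field and because $\epsilon_D$ may be entangled with roots of unity (with $N(\epsilon_D)=\pm1$ interacting with the prime $\ell=2$), the naive degree $2k\varphi(k)$ must be corrected at finitely many $\ell$, exactly as the Artin constant is corrected when $a$ is a perfect power or $a\equiv1\pmod 4$.

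The hard part, and the sole reason GRH enters, is controlling the error terms uniformly in $k$ so that the M\"obius sum converges to this main term. Using the GRH-conditional effective Chebotarev bound $\left|\pi_{C_k}(x,L_k/\bQ)-\tfrac{|C_k|}{[L_k:\bQ]}{\rm Li}(x)\right|\ll\sqrt{x}\,\log\!\big(d_{L_k}x^{[L_k:\bQ]}\big)$ and inserting the discriminant estimate for $L_k$, the contribution of $k\le\sqrt{x}/\log^{2}x$ is controlled directly. For larger $k$ the Chebotarev error is too weak, and I would instead exploit that the $k$-th power condition forces $p\equiv\pm1\pmod{\ell}$ for each $\ell\mid k$, so that Brun--Titchmarsh together with an elementary count bounds the tail $k>\sqrt{x}/\log^{2}x$; the genuinely delicate intermediate range is handled, following Hooley, by a dyadic decomposition balancing these two estimates. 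Assembling the ranges gives $\#\{p\le x:p\in S\}\sim(\text{density})\cdot\pi(x)$ for the set $S$ of inert primes at which $\bar\epsilon_D$ is maximal, and in particular ${\rm ord}_p(\epsilon_D)\gg p$ for $p\in S$.
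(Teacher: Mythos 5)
First, a point of comparison: the paper does not prove this statement at all --- it is quoted as Roskam's theorem \cite{Ros2000}, explicitly flagged as not being used in the sequel, so the only benchmark is Roskam's own argument. Your overall architecture (Kummer extensions $K(\zeta_k,\epsilon_D^{1/k})$, M\"obius inversion over squarefree $k$, GRH-effective Chebotarev for small $k$, Brun--Titchmarsh plus a dyadic decomposition for the tail) is indeed the Hooley-style strategy that Roskam follows, so the analytic skeleton of your sketch is the right one.

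The genuine problem is that you run this machinery on the wrong splitting class, and in doing so you silently replace the statement by a different one. You restrict to \emph{inert} primes and ``interpret'' maximality as generating the norm-$(\pm1)$ subgroup of $\FF_{p^{2}}^{\times}$. But the theorem asserts that ${\rm ord}_{p}(\epsilon_{D})$ is maximal \emph{amongst all elements of} $(\cO_{D}/p\cO_{D})^{\times}$: at an inert prime this group is cyclic of order $p^{2}-1$, while a unit with $\mathrm{Norm}_{K/\bQ}(\epsilon_{D})=\pm1$ satisfies $\bar{\epsilon}_{D}^{\,2(p+1)}=1$, so its order is $O(p)$ and can never be maximal in the stated sense. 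The set $S(D)$ in Roskam's theorem consists of \emph{split} primes $p\cO_{D}=\fp\fp'$, where $(\cO_{D}/p\cO_{D})^{\times}\cong\FF_{p}^{\times}\times\FF_{p}^{\times}$ has exponent $p-1$ and the condition is that $\epsilon_{D}$ attains order $p-1$, i.e.\ is a primitive root modulo $\fp$ (equivalently, via $\epsilon_{D}\epsilon_{D}'=\pm1$, modulo $\fp'$ as well); the paper itself makes exactly this point in the remark following Theorem~\ref{thm:Narkiewic}, and attributes the inert-prime variant to J.~Cohen. So if your argument were completed it would prove a Cohen-type inert statement rather than the quoted theorem; the ``in particular'' bound $\gg p$ survives your reinterpretation, but the theorem as stated does not. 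Separately, the points you defer --- the entanglement $\sqrt{D}\in\bQ(\zeta_{|D|})$, the interaction of $\mathrm{Norm}(\epsilon_{D})=-1$ with $\ell=2$, and the verification that the corrected Euler product is nonzero --- are precisely where such density statements can degenerate (as they do for Artin's conjecture when $a$ is a perfect square), so ``the density is explicitly given and positive'' cannot be asserted without carrying out those degree computations; they are the actual content of Roskam's paper.
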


Narkiewic \cite{Nark88} proved a quadratic analogue of the results
of Gupta and Murty \cite{GM84} and Heath-Brown's \cite{HB86} on
Artin's Conjecture:
\begin{thm}
\label{thm:Narkiewic}Unconditionally, with the possible exception
of two discriminants $D$, there exists an infinite subset $S=S(D)$
of the primes numbers with ${\rm ord}_{p}(\epsilon_{D})$ \textup{is
maximal amongst all elements of $(\cO_{D}/p\cO_{D})^{\times}$ whenever
}$p\in S$. In particular, 
\[
p\in S\implies{\rm ord}{}_{p}(\epsilon_{D})\gg p.
\]
\end{thm}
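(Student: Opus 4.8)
The plan is to read Theorem~\ref{thm:Narkiewic} as a quadratic incarnation of the unconditional work on Artin's primitive root conjecture, and to transport the Gupta--Murty--Heath-Brown sieve to this setting, following \cite{Nark88}.

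First I would record the structure of $(\cO_D/p\cO_D)^\times$ according to the splitting of $p$ in $K=\bQ(\sqrt D)$. When $\chi_D(p)=-1$ the ring $\cO_D/p\cO_D$ is the field $\FF_{p^2}$, and since the nontrivial automorphism is the Frobenius $x\mapsto x^p$ one gets $\epsilon_D^{p+1}=\epsilon_D\bar\epsilon_D=N(\epsilon_D)=\pm1$; when $\chi_D(p)=1$ one has $\cO_D/p\cO_D\cong\FF_p\times\FF_p$ with $\epsilon_D\mapsto(u,\pm u^{-1})$. In either case $\epsilon_D$ lies in a cyclic subgroup of order $m_p\asymp p$ (namely $m_p\in\{p-1,p+1\}$, up to the factor $2$ coming from the sign of the norm), so ``$\epsilon_D$ of maximal order'' is the statement that $\epsilon_D$ generates this full cyclic group, which immediately yields ${\rm ord}_p(\epsilon_D)\gg p$. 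Generation fails precisely when $\epsilon_D$ is an $\ell$-th power in the group for some prime $\ell\mid m_p$, so the problem becomes: produce infinitely many $p$, in a fixed splitting type, for which $\epsilon_D$ is a non-$\ell$-th power for every prime $\ell\mid m_p$.

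Second I would set up the sieve. Fixing, say, inert $p$ so that $m_p=p+1$ up to the norm factor, let $N(x)=\#\{p\le x:\epsilon_D\text{ generates the relevant group}\}$. By inclusion--exclusion the deficit from the total count is bounded by $\sum_\ell M_\ell(x)$, where $M_\ell(x)$ counts $p\le x$ with $\ell\mid m_p$ and $\epsilon_D$ an $\ell$-th power. Each such event is a splitting condition in the Kummer-type extension $K(\zeta_\ell,\epsilon_D^{1/\ell})$ of degree $\asymp\ell^2$, so the Chebotarev density theorem predicts $M_\ell(x)\approx c_\ell\,x/\log x$ with $\sum_\ell c_\ell$ convergent and strictly smaller than the density of the total count $\gg x/\log x$. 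For the small primes $\ell$ this heuristic is made uniform and effective by the Brun--Titchmarsh inequality together with Bombieri--Vinogradov, so that the small-$\ell$ failures consume only a fixed fraction of the main term.

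The hard part is the medium and large primes $\ell$, where the unconditional Chebotarev error term is worthless; this is exactly the range that GRH controls and which yields the conditional Roskam--Hooley statement \cite{Ros2000,Hoo67} quoted above. The device of Gupta--Murty and Heath-Brown \cite{GM84,HB86} to bypass it is twofold: first, restrict to primes $p$ for which $m_p$ carries a large prime factor, trimming the range of relevant $\ell$ by a lower-bound sieve and Brun--Titchmarsh; and second, play several discriminants against one another. Concretely, for three discriminants $D_1,D_2,D_3$ whose fundamental units are multiplicatively independent, so that no nontrivial monomial in them is a perfect power in $K$, a large-sieve/double-counting argument shows that the large-$\ell$ failures cannot simultaneously spoil all three, whence $N(x)>0$ for at least one $D_i$ and infinitely many $x$. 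The combinatorics of which units can be multiplicatively dependent is exactly what bounds the exceptional set of discriminants, and a short argument pins its size at two. I would then assemble these estimates as in \cite{Nark88}, with the multiplicative-independence input controlling the number of exceptions.
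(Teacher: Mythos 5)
The first thing to say is that the paper does not prove this statement at all: Theorem~\ref{thm:Narkiewic} is imported verbatim from Narkiewicz \cite{Nark88}, and the surrounding text only explains how it is used (via Lemma \ref{lem:martix order vs ring order} and Corollary \ref{cor: reg is bounded below by order}) to get the regulator lower bound needed for Theorem \ref{thm:unconditional}. So there is no internal proof to compare against; what you have written is a sketch of the proof that lives in the cited literature. At that level your architecture is the right one: Hooley's reduction of ``index divisible by $\ell$'' to a splitting condition in $K(\zeta_\ell,\epsilon_D^{1/\ell})$, the Gupta--Murty/Heath-Brown device for avoiding GRH, and the ``among three independent elements at least one succeeds, hence at most two exceptions'' combinatorics that produces the two exceptional discriminants.

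Two concrete points would derail an attempt to execute the sketch as written. First, you fix the \emph{inert} primes, but Narkiewicz's unconditional theorem is proved for \emph{split} primes (the paper's own remark after the theorem says exactly this); the inert case is Cohen's later work \cite{Cohen2007} and needs further restrictions on $D$, essentially because producing primes with $p+1$ having a large prime factor and running the Kummer-theoretic analysis in the norm-one torus is a genuinely different (and harder) problem than the split case, where the image of $\epsilon_D$ is just an element of $\FF_p^\times$ and the problem really is classical Artin. Second, you assign the multiplicative-independence/double-counting trick to the \emph{large}-$\ell$ failures; in Gupta--Murty and Heath-Brown it is the other way around. The large-index failures are killed for each unit separately by the elementary observation that $\mathrm{ord}_p(\epsilon_D)\le y$ forces $p$ to divide the nonzero integer $\prod_{k\le y}\mathrm{Norm}_{K/\bQ}(\epsilon_D^k-1)$, which has $O(y^2)$ prime factors, so only $O(y^2)$ primes can fail this way; what the three multiplicatively independent units are needed for is the remaining \emph{bounded-index} failures (e.g.\ all three being squares), after a lower-bound sieve has ensured that $p-1$ has only the large prime factor plus a bounded part. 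You would also need to say what ``multiplicatively independent'' means for fundamental units of three different quadratic fields before the Kummer degrees come out right. None of this is fatal to the statement --- it is Narkiewicz's theorem and it is true --- but the sketch as written misplaces the load-bearing steps.
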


\begin{rem}
In order to have a better understanding of the bound $\gg p$, it
is informative to understand the maximal possible order that $\epsilon_{D}$
can have in $(\cO_{D}/p\cO_{D})^{\times}$. Let $p$ be a rational
odd prime which is unramified in $\cO_{D}$.  If $p$ splits in $D$
then $\cO_{D}/p\cO_{D}\cong\bF_{p}\times\bF_{p}$ so $(\cO_{D}/p\cO_{D})^{\times}\cong\bF_{p}^{\times}\times\bF_{p}^{\times}.$
So the maximal order of elements in $(\cO_{D}/p\cO_{D})^{\times}$
is $p-1$. When $p$ is inert in $\cO_{D}$, we have that $\cO_{D}/p\cO_{D}\cong\bF_{p^{2}}$.
If ${\rm Norm_{K/\bQ}}(\epsilon_{D})=1$ then the image of $\epsilon_{D}$
belongs to the kernel of the norm map from $\bF_{p^{2}}^{\times}\ra\bF_{p}^{\times}$
which is a cyclic group of order $\frac{p^{2}-1}{p-1}=p+1$. Similarly
if ${\rm Norm_{K/\bQ}}(\epsilon_{D})=-1$, then the image of $\epsilon_{D}$
belongs to a cyclic group of order $2(p+1)$. In all cases the maximal
possible order is $\gg p$. 

In fact, the set $S(D)$ of Theorem \ref{thm:Narkiewic} consists
of rational primes that split in $\bQ(\sqrt{D})$ so Theorem \ref{thm:Narkiewic}
is true as stated. Joseph Cohen \cite{Cohen2007} proved a similar
result for non-split primes under some more (rather mild) restrictions
on the possible discriminants that should be excluded.

Relaxing mildly the condition of being maximal, Par Kurlberg \cite{Par2003}
was able to prove a much stronger \emph{conditional} theorem:
\end{rem}

\begin{thm}
\label{thm: Par} Assume the GRH holds and let $M$ be a hyperbolic
matrix in ${\rm SL_{2}}(\bZ)$. Then, for every $\epsilon>0$, there
exists a subset $S\subset\bN$ of density one such that 
\[
N\in S\implies{\rm ord}{}_{N}(M)\gg_{\epsilon}N^{1-\epsilon}.
\]
Similarly, for every $\epsilon>0$, there exists a subset $T\subset{\rm Primes}$
of density one such that 
\[
p\in T\implies{\rm ord}{}_{p}(M)\gg_{\epsilon}p^{1-\epsilon}.
\]
\end{thm}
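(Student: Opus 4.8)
The plan is to reduce Theorem \ref{thm: Par} to a statement about the order of elements in $(\bZ/N\bZ)^\times$ or in unit groups of orders in a quadratic field, and then invoke the GRH-conditional machinery of Kurlberg. The starting observation is that a hyperbolic $M\in{\rm SL}_2(\bZ)$ has an irrational real eigenvalue $\lambda$ lying in a real quadratic field $K=\bQ(\lambda)$, with $\lambda$ a unit of norm $\pm1$ in some order $\cO\subset\cO_K$; indeed $\lambda$ and $\lambda^{-1}$ are the roots of the characteristic polynomial $t^2-{\rm Trace}(M)t+1$. Diagonalizing $M$ over $K$ shows that ${\rm ord}_N(M)$ is, up to a bounded multiplicative factor, equal to ${\rm ord}_N(\lambda)$, the multiplicative order of $\lambda$ in $(\cO/N\cO)^\times$; this is precisely the content of Lemma \ref{lem:martix order vs ring order} applied with $\lambda$ in place of $\epsilon_D$. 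So the theorem is equivalent to the assertion that, under GRH, for a density-one set of $N$ (resp.\ density-one set of primes $p$) one has ${\rm ord}_N(\lambda)\gg_\epsilon N^{1-\epsilon}$.

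First I would handle the prime case, which is the analytic core. For a prime $p$ unramified in $\cO$, the unit $\lambda$ lives in $(\cO/p\cO)^\times$, a group of order $p-1$, $p+1$, or $p(p-1)$ depending on the splitting behaviour of $p$ (as spelled out in the Remark following Theorem \ref{thm:Narkiewic}). The order ${\rm ord}_p(\lambda)$ is small only when $\lambda$ lies in a proper subgroup, i.e.\ when $\lambda$ is a $q$-th power in $(\cO/p\cO)^\times$ for some prime $q$, or equivalently when $p$ splits completely in the Kummer-type extension $K(\lambda^{1/q})/\bQ$ (or its analogue over the quadratic field). The standard approach, following Hooley's treatment of Artin's conjecture and Kurlberg's quadratic refinement, is to bound the number of $p\le x$ for which ${\rm ord}_p(\lambda)<p^{1-\epsilon}$ by summing, over primes $q$ up to a suitable cutoff, the count of $p$ that split completely in these Kummer extensions. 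Under GRH one applies the effective Chebotarev density theorem to each extension to control this count with a power-saving error term, and the point is that the contribution is $o(\pi(x))$, giving a density-one set. This is where GRH is essential and where the bulk of the work lies.

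The case $N\in\bN$ composite is then deduced from the prime case by a sieve/factorization argument. Writing ${\rm ord}_N(\lambda)={\rm lcm}_{p^e\| N}{\rm ord}_{p^e}(\lambda)$, one notes that lifting from $p$ to $p^e$ multiplies the order by a power of $p$ (by the usual argument on $1+p\cO$), so ${\rm ord}_N(\lambda)$ is controlled from below by the orders at the largest prime factors of $N$. Using the prime-case estimate for a density-one set of primes, together with the fact that a density-one set of integers $N$ has a large prime factor $p$ with $p\mid N$ exactly once and $\lambda$ of nearly maximal order mod $p$, one concludes ${\rm ord}_N(\lambda)\gg_\epsilon N^{1-\epsilon}$ for density-one $N$.

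\emph{The hard part} will be the effective Chebotarev input and the bookkeeping in the sieve: one must show that the exceptional primes $q$ (for which the Kummer extension degenerates, e.g.\ because $\lambda$ is already a $q$-th power in $K$, or because of the quadratic field interfering with the cyclotomic towers) form a negligible set, and that the GRH error terms, summed over all moduli $q$, do not swamp the main term. The quadratic setting (as opposed to Artin's original $\bZ$ setting) introduces the extra subtlety that one works in $(\cO/N\cO)^\times$ rather than $(\bZ/N\bZ)^\times$, so the relevant Galois extensions are built over $K$ rather than $\bQ$; keeping track of degrees and ramification there is the principal technical obstacle. Fortunately, all of this is exactly the content of Kurlberg's paper \cite{Par2003}, so in the present note I expect the proof to consist of correctly translating ${\rm ord}_N(M)$ into ${\rm ord}_N(\lambda)$ via the eigenvalue/diagonalization step and then citing \cite{Par2003} for the density-one lower bounds.
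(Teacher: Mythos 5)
The paper gives no proof of this theorem at all: it is quoted verbatim as Kurlberg's result and attributed to \cite{Par2003}, which is exactly where your proposal ends up. Your sketch of the underlying Hooley-style Kummer/Chebotarev argument and the reduction of ${\rm ord}_N(M)$ to the order of the eigenvalue is a fair account of what happens inside \cite{Par2003}, so the proposal is consistent with the paper's (purely citational) treatment.
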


\subsubsection{Hecke neighbors}

The results of the previous section deal with properties of $x_{D}$
(and correspondingly with $\epsilon_{D}$), but we are interested
in a general element $x\in K$ which is not necessarily $x_{D}$.
To this end we first show that any two elements $K$ can be connected
to each other using the Hecke correspondence and then that this influences
the regulator and the discriminant only by a multiplicative factor.

Let $p$ be a rational prime; two elements $x,y\in K\setminus\bQ$
are called $p$-Hecke neighbors if $\Lambda_{x}\subset\Lambda_{y}$
or $\Lambda_{y}\subset\Lambda_{x}$ with index $p$. 
\begin{lem}
Let $x,y\in K\setminus\bQ$. Then $y$ can be connected to $x$ via
a chain of Hecke neighbors. This is, there exist $k\in\bN$ and 
\[
x_{0}=x,x_{1},\cdots,x_{k}=y
\]
such that $x_{i}$ and $x_{i+1}$ are $p_{i}$-Hecke neighbors, with
$p_{i}=p_{i}(x,y)$ some rational primes.
\end{lem}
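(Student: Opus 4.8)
The plan is to work entirely with the lattices $\Lambda_z=\bZ\cdot 1\oplus\bZ\cdot z$ and to reduce the statement to the elementary fact that any two full-rank lattices in the two-dimensional $\bQ$-vector space $K$ are joined by a chain of prime-index containments. The only point requiring care is that every lattice appearing in the chain must again be of the form $\Lambda_z$, i.e.\ must contain $1$ as a \emph{primitive} vector; so I would first isolate a small observation guaranteeing that this property propagates through the construction.

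First I would record that for every $w\in K\setminus\bQ$ the element $1$ is primitive in $\Lambda_w$: if $1=kv$ with $v\in\Lambda_w$ and $k\in\bN$, then writing $v=a+bw$ with $a,b\in\bZ$ forces $b=0$ and $ka=1$, whence $k=1$. The same one-line argument shows this primitivity is inherited by every sublattice of $\Lambda_w$ that still contains $1$. Conversely, any full-rank lattice $\Lambda\subset K$ in which $1$ is primitive is of the form $\Lambda_z$: one extends $\{1\}$ to a $\bZ$-basis $\{1,z\}$ of $\Lambda$, and then $z\in K\setminus\bQ$ (the pair $\{1,z\}$ is $\bQ$-independent, so $\bQ(z)=K$) and $\Lambda=\Lambda_z$. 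Thus ``being of the form $\Lambda_z$'' is exactly the condition ``$1$ is primitive''.

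Next I would set $M=\Lambda_x\cap\Lambda_y$. Since $\Lambda_x$ and $\Lambda_y$ are full-rank lattices in $K$, so is $M$, and it has finite index in each; moreover $1\in M$, and by the remark above $1$ is primitive in $M$. The quotient $\Lambda_x/M$ is a finite abelian group, so choosing a composition series and pulling it back along $\Lambda_x\to\Lambda_x/M$ yields a chain $\Lambda_x=\Lambda^{(0)}\supset\Lambda^{(1)}\supset\cdots\supset\Lambda^{(r)}=M$ in which every successive index is prime. Because each $\Lambda^{(i)}$ satisfies $M\subseteq\Lambda^{(i)}\subseteq\Lambda_x$ and contains $1$, primitivity is preserved, so each $\Lambda^{(i)}$ equals $\Lambda_{x_i}$ for a suitable $x_i\in K\setminus\bQ$ (taking $x_0=x$). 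Carrying out the same construction for $\Lambda_y/M$ and concatenating the two chains at $M$ produces a single chain of admissible lattices from $\Lambda_x$ to $\Lambda_y$ in which consecutive terms are related by a containment of prime index; by the definition of Hecke neighbors this is precisely the desired chain $x=x_0,x_1,\dots,x_k=y$.

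There is no serious obstacle here: the only thing that could go wrong is leaving the class of lattices of the form $\Lambda_z$, and the choice of $M$ as the \emph{intersection} rather than, say, the sum $\Lambda_x+\Lambda_y$ (in which $1$ need not stay primitive) is exactly what keeps every intermediate lattice admissible. Everything else is the standard reduction of an index-$n$ containment to a chain of prime-index steps via a composition series of the finite quotient.
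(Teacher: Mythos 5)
Your argument is correct, but it takes a genuinely different route from the paper's. The paper's proof is a three-line explicit construction: write $y=\frac{Ax+B}{C}$ with $A,B,C\in\bZ$, connect $x$ to $Ax$ by factoring $A$ into primes (each step being a containment $\Lambda_{ax}\supset\Lambda_{pax}$ of index $p$), observe that $\Lambda_{Ax}=\Lambda_{Ax+B}$, and then climb from $Ax+B$ up to $y$ by factoring $C$. This keeps every intermediate lattice manifestly of the form $\Lambda_z$, so no auxiliary characterization is needed. Your proof replaces the explicit common sublattice $\Lambda_{Ax+B}$ by the intersection $\Lambda_x\cap\Lambda_y$ and replaces the prime factorizations of $A$ and $C$ by a composition series of the finite quotients; the price is that you must identify the admissible lattices as exactly the full lattices in which $1$ is primitive (together with the standard fact that a primitive vector extends to a $\bZ$-basis), and verify that primitivity survives passage to intermediate lattices. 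All of these steps check out, including the observation that the intersection, unlike the sum, preserves primitivity of $1$. What your version buys is a basis-free, more structural statement (any two full lattices containing $1$ primitively are joined by a chain of prime-index containments, all pointing inward to a single common sublattice); what the paper's version buys is brevity and complete explicitness, which is all the lemma is used for downstream. Both yield the same V-shaped chain, so either proof is acceptable here.
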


\begin{proof}
Write $y=\frac{Ax+B}{C}$ with $A,B,C\in\bZ$. Factoring $A$ into
primes, we see that $x$ can be connected to $Ax$. Clearly $\Lambda_{Ax}=\Lambda_{Ax+B}$,
and by factoring $C$ into primes we see that $Ax+B$ can be connected
to $\frac{Ax+B}{C}=y$.
\end{proof}

\begin{lem}
\label{lem:Hecke neighbors}Let $x,y\in K\setminus\bQ$ be $p$-Hecke
neighbors. Then ${\rm Reg}(\cO_{x})\asymp_{p}{\rm Reg}(\cO_{y})$
and ${\rm Disc}(\cO_{x})\asymp_{p}{\rm Disc}(\cO_{y})$.
\end{lem}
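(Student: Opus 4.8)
The plan is to reduce both claims to the behaviour of the \emph{conductor}, and the crucial first step is to show that the two orders $\cO_x$ and $\cO_y$ are sandwiched between each other up to a factor of $p$. Since being a $p$-Hecke neighbor and the relation $\asymp_p$ are both symmetric, I may assume $\Lambda_x\subseteq\Lambda_y$ with index $p$, so that $p\Lambda_y\subseteq\Lambda_x\subseteq\Lambda_y$. First I would compare the multiplier rings directly: if $a\in\cO_x$ then $(pa)\Lambda_y=a(p\Lambda_y)\subseteq a\Lambda_x\subseteq\Lambda_x\subseteq\Lambda_y$, so $pa\in\cO_y$; running the symmetric computation gives $p\cO_y\subseteq\cO_x$. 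Thus $p\cO_x\subseteq\cO_y$ and $p\cO_y\subseteq\cO_x$.

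Writing $\cO_x=\bZ+f_x\cO_K$ and $\cO_y=\bZ+f_y\cO_K$ with $f_x=(\cO_K:\cO_x)$, $f_y=(\cO_K:\cO_y)$ the respective conductors, the inclusion $p\cO_y\subseteq\cO_x$ forces the $\cO_K$-ideal $pf_y\cO_K$ into $\cO_x$, hence into the conductor $f_x\cO_K$ (the largest $\cO_K$-ideal contained in $\cO_x$); therefore $f_x\mid pf_y$, and symmetrically $f_y\mid pf_x$. Comparing $\ell$-adic valuations shows that $f_x$ and $f_y$ agree at every prime $\ell\neq p$ and that their $p$-valuations differ by at most $1$, so $f_y/f_x\in\{1,p,p^{-1}\}$; after possibly swapping $x$ and $y$ I may assume $\cO_y\subseteq\cO_x$ with $f_y/f_x\in\{1,p\}$. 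The discriminant statement is now immediate from the relation ${\rm Disc}(\cO)={\rm Disc}(\cO_K)(\cO_K:\cO)^2$ used above, since ${\rm Disc}(\cO_x)/{\rm Disc}(\cO_y)=(f_x/f_y)^2\in\{1,p^2,p^{-2}\}$.

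For the regulator I would use the identity ${\rm Reg}(\cO)=(\cO_K^\times:\cO^\times)\,{\rm Reg}(\cO_K)$ underlying the displayed formula ${\rm Reg}(\cO_{N^2D})={\rm Reg}(\cO_D)R(N)$, which gives ${\rm Reg}(\cO_y)/{\rm Reg}(\cO_x)=(\cO_x^\times:\cO_y^\times)$. Letting $\eta$ be a fundamental unit of $\cO_K$, the same ``scalar matrix mod $N$'' argument already made shows, using ${\rm Norm}(\eta)=\pm1$, that $(\cO_K^\times:\cO_f^\times)$ equals the order of the image of $\eta$ in $G_f:=(\cO_K/f\cO_K)^\times/(\bZ/f\bZ)^\times$. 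The reduction $\cO_K/f_y\cO_K\to\cO_K/f_x\cO_K$ induces a surjection $G_{f_y}\twoheadrightarrow G_{f_x}$ whose kernel is a quotient of $\ker\!\big((\cO_K/f_y\cO_K)^\times\to(\cO_K/f_x\cO_K)^\times\big)$; the latter has order $|(\cO_K/f_y\cO_K)^\times|/|(\cO_K/f_x\cO_K)^\times|\le p^2$ because $f_y/f_x\in\{1,p\}$. Hence the order of $\eta$ can grow by at most a factor $\le p^2$ when passing from $G_{f_x}$ to $G_{f_y}$, so $(\cO_x^\times:\cO_y^\times)\le p^2$, which yields ${\rm Reg}(\cO_x)\asymp_p{\rm Reg}(\cO_y)$.

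The hard part will be the regulator comparison, and specifically bounding the unit index $(\cO_x^\times:\cO_y^\times)$ by a quantity depending only on $p$ rather than on the (possibly enormous) conductors $f_x,f_y$. The point that makes this work is that the order of the fundamental unit modulo the conductor can only increase by the size of the kernel of $G_{f_y}\to G_{f_x}$, and that kernel is controlled by $|(\cO_K/p\cO_K)^\times|\le p^2$. By contrast the discriminant estimate is a one-line consequence of the conductor relation once the containments $p\cO_x\subseteq\cO_y$ and $p\cO_y\subseteq\cO_x$ have been established.
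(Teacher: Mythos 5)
Your argument is correct, but it proceeds along genuinely different lines from the paper's. For the discriminant, the paper works with the monogenic generators, writing $\cO_{z}=\bZ[l_{z}x_{D}]$ and characterizing $l_{z}$ as $\gcd\set{m:mx_{D}\in\cO_{z}}$, then deducing $l_{x}\mid pl_{y}$ and $l_{y}\mid pl_{x}$ from the containment $p\Lambda\subset\Lambda'$ for any index-$p$ sublattice $\Lambda'\subset\Lambda$; you instead establish the ring-level sandwiching $p\cO_{x}\subseteq\cO_{y}$ and $p\cO_{y}\subseteq\cO_{x}$ directly and read off the conductor relation $f_{y}/f_{x}\in\set{1,p,p^{-1}}$, which is a cleaner and slightly more precise statement of the same fact. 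The real divergence is in the regulator bound: the paper avoids any structure theory of $(\cO_{K}/f\cO_{K})^{\times}$ by observing that a unit of $\cO_{x}$ permutes the $p+1$ index-$p$ sublattices of $\Lambda_{x}$, so some power $\ell\le p+1$ of it fixes $\Lambda_{y}$, giving the unit-index bound $p+1$ by pure combinatorics; you instead identify $(\cO_{K}^{\times}:\cO_{f}^{\times})$ with the order of the fundamental unit in $G_{f}=(\cO_{K}/f\cO_{K})^{\times}/(\bZ/f\bZ)^{\times}$ and bound the growth of that order by the size of the kernel of $G_{f_{y}}\twoheadrightarrow G_{f_{x}}$, obtaining $p^{2}$. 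Both constants are equally good for the $\asymp_{p}$ conclusion; your route requires the (standard but nontrivial) facts that the conductor ideal is the largest $\cO_{K}$-ideal in the order and that unit groups surject under reduction of finite rings, whereas the paper's permutation argument needs only the finiteness of the set of index-$p$ sublattices. In exchange, your version yields sharper structural information (the exact possible conductor ratios), which the paper does not record.
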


\begin{proof}
Without less of generality we can assume that $\Lambda_{y}\subset\Lambda_{x}$.
Note that any element $\alpha\in K^{\times}$ act by multiplication
as an automorphism on $\pa{K,+}$. Thus any element $\alpha\in\cO_{x}^{\times}$
fixes $\Lambda_{x}$ and acts on its sublattices of index $p$. Therefore
there exists $1\leq\ell\leq p+1$ ($p+1$ being the number of index
$p$ sublattices) such that $\alpha^{\ell}$ fixes $\Lambda_{y}$,
i.e., $\alpha^{\ell}\in\cO_{y}^{\times}$. Vice versa, any element
$\alpha\in\cO_{y}^{\times}$ fixes $\Lambda_{y}$ and acts on its
suplattices (that is, lattices with $\Lambda\supset\Lambda_{y})$
of index $p$. As there are also $p+1$ such suplattices, there exists
$1\leq\tilde{\ell}\leq p+1$ such that $\alpha^{\tilde{\ell}}$ fixes
$\Lambda_{x}$, i.e., $\alpha^{\tilde{\ell}}\in\cO_{x}^{\times}$. 

Now, for any $z\in K\setminus\bQ$, 
\[
{\rm Reg}(\cO_{z})={\rm Reg}(\cO_{D})\cdot(\cO_{D}^{\times}:\cO_{z}^{\times}),
\]
so we have that ${\rm Reg}(\cO_{z})\asymp k(z)$ where $k(z)$ is
defined to be the integer such that $\cO_{z}^{\times}=\langle\pm\epsilon_{D}^{k(z)}\rangle$.
By the preceding discussion, $k(x)\asymp_{p}k(y)$ so the first part
of the lemma follows. 

For the second part, first recall (see e.g. \cite[Lemma 7.2]{cox2011primes})
that orders in quadratic fields are monogenic (i.e. of the form $\bZ[\alpha]$)
and since every order is contained in $\bZ[x_{D}]$ we can define
$l_{z}$ to be the integer with the property $\cO_{z}=\bZ[l_{z}x_{D}]$. 

Using this, the claim will follows once we show that $l_{x}\asymp_{p}l_{y}$.
To this end recall the definition of $\cO_{x}$ and note that 
\begin{equation}
l_{z}={\rm gcd}\set{m:mx_{D}\in\cO_{z}}.\label{eq:GCD}
\end{equation}
Still working under the assumption that $\Lambda_{y}\subset\Lambda_{x}$
and using the fact that any index $p$ sublattice of a lattice $\Lambda$
contains $p\Lambda$, we have:
\[
pl_{x}x_{D}\Lambda_{y}\subset pl_{x}x_{D}\Lambda_{x}\subset p\Lambda_{x}\subset\Lambda_{y}
\]
so (\ref{eq:GCD}) implies that $l_{y}|pl_{x}$ . Similarly, 
\[
pl_{y}x_{D}\Lambda_{x}=l_{y}x_{D}p\Lambda_{x}\subset l_{y}x_{D}\Lambda_{y}\subset\Lambda_{y}\subset\Lambda_{x}
\]
so $l_{x}|pl_{y}$. Therefore $\frac{1}{p}l_{y}\leq l_{x}\leq pl_{y}$.
As for any $l\in\bN$, ${\rm disc}(\bZ[lx_{D}])=l^{2}D$, we are done.

\end{proof}

\subsection{Proofs of Theorems \ref{thm:GRH on Natural numbers and primes} and
\ref{thm:unconditional}.}
\begin{proof}
The proofs of both theorems are very similar. We start with Theorem
\ref{thm:GRH on Natural numbers and primes}, and comment afterwards
on how to adapt the proof for Theorem \ref{thm:unconditional}. Let
$x$ be the quadratic irrational that appears in the statement of
the Theorem and let $D$ be the discriminant of $\bQ(x)$. We first
translate the statement on continued fractions to a statement on equidistribution
of closed geodesics on $X_{2}$: By \cite[Theorem 8.8, Theorem 8.9 and Corollary 2.10]{AS1}\footnote{Note that for different reasons the results in \cite{AS1} are phrased
for Lipschitz functions. For the purpose of this paper, one can easily
approximate characteristic functions on $[0,1]$ with smooth functions
with compact support, and translate the corresponding statement using
the results in \cite{AS1}.} it is enough to show that there exists a density one subset $S=\set{N_{i}}\subset\bN$
and $0<\delta_{1}$ such that for $f\in C_{c}^{\infty}(X)$ we have
\begin{equation}
\av{\mu_{L_{N_{i}x}}(f)-\mu_{X_{2}}(f)}\ll S(f){\rm Disc}(L_{N_{i}x})^{-\delta_{1}}.\label{eq:equi of loops}
\end{equation}
To this end, we will show that there exists a density one subset $S=\set{N_{i}}\subset\bN$
such that for every $\epsilon>0$ small enough 
\begin{equation}
{\rm Reg}(\cO_{N_{i}x})=l(L_{N_{i}x})\gg{\rm Disc}(L_{N_{i}x})^{\frac{1}{2}-\epsilon}.\label{eq: Reg disc condtion}
\end{equation}
This is indeed enough as by Theorem \ref{thm:subcollection Duke},
we have that (\ref{eq: Reg disc condtion}) implies (\ref{eq:equi of loops}). 

Toward establishing (\ref{eq: Reg disc condtion}), note that there
exist 
\[
x_{0}=x,x_{1},\cdots,x_{k}=x_{D}
\]
such that $x_{i}$ and $x_{i+1}$ are $p_{i}$-Hecke neighbors, with
$p_{i}=p_{i}(x)$ some rational primes. Recursive use of Lemma \ref{lem:Hecke neighbors}
implies that ${\rm Reg}(\cO_{x})\asymp_{c(x)}{\rm Reg}(\cO_{x_{D}})$
and ${\rm Disc}(\cO_{x})\asymp_{c(x)}{\rm Disc}(\cO_{x_{D}})$ where
$c(x)$ is a constant depending only on $x$. In fact, $c(x)$ is
proportional to the product of the rationals primes appearing in the
chain of Hecke neighbors connecting $x$ and $x_{D}$. Noting that
for every $N\geq1$, the same rational primes connect $Nx$ to $Nx_{D}$,
it follows that that ${\rm Reg}(\cO_{Nx})\asymp_{c(x)}{\rm Reg}(\cO_{Nx_{D}})$
and ${\rm Disc}(\cO_{Nx})\asymp_{c(x)}{\rm Disc}(\cO_{Nx_{D}})$,
that is, the proportionality constant $c(x)$ is independent of $N$.
It follows that it is enough to show (\ref{eq: Reg disc condtion})
for $x_{D}$ instead of $x$ so we can use the results of Subsection
\ref{subsec:realting orders} which implicitly deals with $x=x_{D}$. 

Assuming GRH and applying Theorem \ref{thm: Par} for the hyperbolic
matrix $\varphi(\epsilon_{D})$ and Corollary \ref{cor: reg is bounded below by order},
we find a density one subset $S=\set{N_{i}}\subset\bN$ such that
for every $\epsilon>0$ small enough we have 
\begin{equation}
{\rm Reg}(\cO_{N_{i}^{2}D})\gg{\rm ord_{N_{i}}(\varphi(\epsilon_{\text{D}}))}\gg_{\epsilon}N_{i}^{1-\epsilon}.\label{eq:reg bound}
\end{equation}
Note that $\Lambda_{Nx_{D}}=\bZ\cdot1\oplus\bZ\cdot Nx_{D}$ is in
fact the order $\bZ[Nx_{D}]$, so $\cO_{N_{i}x_{D}}=\bZ[N_{i}x_{D}]$
which has discriminant $N_{i}^{2}D$. Thus ${\rm Reg}(\cO_{L_{N_{i}x_{D}}})={\rm Reg}(\cO_{N_{i}^{2}D})$
and ${\rm Disc}(\cO_{L_{N_{i}x_{D}}})={\rm Disc}(\cO_{N_{i}^{2}D})=N_{i}^{2}D$.
Therefore (\ref{eq:reg bound}) implies (\ref{eq: Reg disc condtion})
for $x=x_{D}$ (where the implicit constants depend only on $x$,
which is fixed throughout). This concludes the proof of the Theorem
\ref{thm:GRH on Natural numbers and primes}.

The proof of the second part of Theorem \ref{thm:GRH on Natural numbers and primes}
follow the exact same lines using as an input the second part of Theorem
\ref{thm: Par}. 

We explain how to adapt the proof above to get a proof for Theorem
\ref{thm:unconditional}: Assuming $D$ does not fall into the 2 possible
exceptions, Theorem \ref{thm:Narkiewic} furnishes an infinite set
of primes $S=\set{p_{i}}$ with 
\[
{\rm ord}{}_{p_{i}}(\epsilon_{D})\gg p_{i}.
\]
Translating through Lemma \ref{lem:martix order vs ring order} and
using Corollary \ref{cor: reg is bounded below by order} we see that
\[
{\rm Reg}(\cO_{p_{i}^{2}D})\gg{\rm ord}_{p_{i}}(\varphi(\epsilon_{D}))\gg_{\epsilon}p_{i}
\]
which establish a (slightly stronger) analog of (\ref{eq:reg bound}).
One proceeds now exactly as above to establish Theorem \ref{thm:unconditional}.
\end{proof}

\bibliographystyle{plain}
\bibliography{AS2}

\end{document}